\documentclass[11pt]{amsart}
 \usepackage{amsthm} 
 \usepackage{float}
 \usepackage[bottom=3.5cm, right=2.7cm, left=2.8cm, top=3.0cm]{geometry}

    \usepackage{xcolor}
    \usepackage{hyperref}
         \hypersetup{colorlinks, citecolor=blue, filecolor=blue, linkcolor=black, urlcolor=blue}
    \usepackage{soul}
\usepackage{enumitem}
    \usepackage{mathabx}
    \usepackage[numbers]{natbib}
    \usepackage{mathtools}
    \usepackage{tikz-cd}
    \usepackage{bigints}
    \usepackage{xfrac}
    \usepackage{mathrsfs}
    \usepackage{comment}
    \usepackage{verbatim}
    \usepackage{adjustbox}
    \usepackage{graphicx}
    \usepackage{import}
     \usepackage{upgreek}

\usepackage{marginnote}

\newcommand{\R}{\mathbb{R}}

\newcommand{\T}{\mathbb{T}}\newcommand{\C}{\mathbb{C}}

\numberwithin{equation}{section}

\newtheorem{theorem}{Theorem}[section] 

\newtheorem{lemma}[theorem]{Lemma}
\newtheorem*{lemma*}{Lemma}

\newtheorem*{proposition*}{Proposition}
\newtheorem{conjecture}[theorem]{Conjecture}

\newtheorem*{question*}{Question}
\newtheorem*{theorem*}{Theorem}
\newtheorem*{claim*}{Claim}

\theoremstyle{definition}

\theoremstyle{remark}
\newtheorem{remark}[theorem]{Remark}

\begin{document}

\title{Intermingled basins: Kan's example on the Riemann sphere}

\author{Abbas Fakhari, Ale Jan Homburg and Sebastian van Strien}

\address{A. Fakhari\\ Department of Mathematics,
	Shahid Beheshti University, 19839 Tehran, Iran}
\email{a\_fakhari@sbu.ac.ir}

\address{A.J. Homburg\\ KdV Institute for Mathematics, University of Amsterdam, Science park 107, 1098 XG Amsterdam, Netherlands\newline Mathematical Institute, University of Leiden, PO Box 9512, 2300 RA Leiden, Netherlands \newline  Department of Mathematics,
	Imperial College London,
	180 Queen's Gate,
	London SW7 2AZ,
	United Kingdom}
\email{a.j.homburg@uva.nl}

\address{S.J. van Strien \\  Department of Mathematics,
	Imperial College London,
	180 Queen's Gate,
	London SW7 2AZ,
	United Kingdom}
\email{s.van-strien@imperial.ac.uk}	

\begin{abstract}
Kan's discovery of dynamical systems with two attractors whose basins of attraction both have full support, featured specific examples of skew product systems of interval diffeomorphisms forced by expanding circle maps. The interval diffeomorphisms are polynomial maps and can hence be considered on the Riemann sphere. We consider the resulting skew product systems of holomorphic maps on the Riemann sphere forced by expanding circle maps,
and establish the existence of three attractors whose basins of attraction all have full support and are thus intermingled.
\end{abstract}

\maketitle

\section{Introduction}

Dynamics generated by the iteration of maps can be intricate.  One phenomenon is that of
attractors with riddled basins, which are  basins of attraction of positive measure, but without open sets \cite{MR1206103}.
The notion of attractor used here is the one introduced in \cite{MR0790735}. We refer to such attractors as metric attractors.
Kan \cite{MR1254075} constructed examples of maps on a cylinder with two metric attractors with riddled basins where the support of each basin is the entire cylinder.
That is, each open set intersects both basins in a set of positive measure.
The basins are called intermingled.
Several papers have been written since that develop theory on intermingled basins, we refer to \cite{FAKHARI_HOMBURG_2026} and references therein.

The key example from  \cite{MR1254075} involves a smooth map on the cylinder $\mathbb{T} \times [0,1]$ (where $\mathbb{T} = \mathbb{R}/\mathbb{Z}$), that fixes the boundaries
$\mathbb{T} \times \{0\}$ and $\mathbb{T} \times \{1\}$.
The map is a skew product map with an explicit expression
\begin{align}\label{e:kan}
	(u,x) &\mapsto \left(3 u \pmod 1 , x + \varepsilon \cos(2 \pi u) x (1-x) \right),
\end{align}
with $0 < |\varepsilon| < 1$.
The boundaries $\mathbb{T} \times \{0\}$ and $\mathbb{T} \times \{1\}$ in this example are metric attractors with intermingled basins.
In more detail, for Lebesgue almost any $u \in \mathbb{T}$, there is $0 < r(u) < 1$, so that $\{u\} \times [0,r(u))$ is in the basin of  $\mathbb{T} \times \{0\}$, and $\{u\} \times (r(u),1]$ is in the basin of  $\mathbb{T} \times \{1\}$.
The values of $r$ vary wildly so that every open set intersects both basins in sets of positive measure.
This description implies that the boundary of the cylinder is the 
 likely limit set, that is, the omega-limit set of a full measure set of initial points lies in $\cup_{p=0,1} \mathbb{T}\times \{p\}$.
A picture of the basins can be found in \cite{MR2477416}.

Noting that \eqref{e:kan} involves an expression that is polynomial in $x$, a natural choice is to take
$x$ from the complex numbers. That is the focus of this paper: we explore the existence of metric attractors and their basins in Kan's example, but considered with fiber maps on the Riemann sphere $\hat{\mathbb{C}}$.
 For $\alpha \in \mathbb{C}$ and a parameter $u \in \mathbb{T}$,
consider the holomorphic map $f_u : \hat{\mathbb{C}} \righttoleftarrow$ given by
	\begin{align}\label{e:fu}
	f_u(z) &:= z + \alpha \cos(2 \pi u) z (1-z).
	\end{align}
Note that the points $0, 1, \infty$ are fixed points of $f_u$ for any $u \in \mathbb{T}$.
We consider such maps that are forced, through dynamics on $u$, by sufficiently expanding circle maps. 	
For a natural number $L > 1$, take the expanding circle map $E$ on  $\mathbb{T}$ given by
\[
E (u) := L u \pmod 1,
\]
and consider
the skew product map
$F: \mathbb{T} \times \hat{\mathbb{C}} \righttoleftarrow$  defined by
\begin{align}\label{e:skew}
F (u,z) &\coloneqq \left(E (u)  , f_u(z) \right) = \left( L u \pmod 1 , z + \alpha \cos(2 \pi u) z (1-z) \right).
\end{align}
For positive iterates of $F$ we use notation
\begin{align*}
	F^n (u,z) &= \left(E^n (u)  , f^n_u(z) \right),
\end{align*}
where $f^n_u (z) := f^n_{E^{n-1} u} \circ \cdots \circ f_u (z)$.

The skew product map $F$ is equivariant with respect to the involution  $J : \mathbb{T} \times \hat{\mathbb{C}} \righttoleftarrow$ given by
  \[
  J(u,z) = \Big( u + \frac{1}{2} \pmod 1 , 1-z \Big).  
  \]
  That is, $F \circ J = J \circ F$.
  In particular $f_u$ and $f_{u + 1/2}$ are conjugate using $z \mapsto 1-z$.
  
\begin{remark}\label{r:equivariant}
	At $u=1/4$ and $u=3/4$ we find $f_u (z) \equiv z$. In particular we find that the degree of $f_u$ is not constant in $u$, and that the family of maps $u \mapsto f_u$
    does not give a continuous family of rational maps with fixed degree on the Riemann sphere. Moreover, the circle $\mathbb{T} \times \{\infty\}$ cannot be an attractor with an open basin of attraction. The inverse images of $0\in\hat{\mathbb{C}}$ (equal to $(1+\alpha \cos(2 \pi u))/(\alpha \cos (2 \pi u))$ for $u \neq 1/4, 3/4$) and of $1$ (equal to $1/(\alpha \cos(2 \pi u))$ for $u \neq 1/4,3/4$) approach $\infty$ for $u \to 1/4,3/4$.   
  For $L$ odd, $u = 1/4$ and $u = 3/4$ in $\mathbb{T}$ are either fixed points or period-two points for $E$.  
  For $L$ even, $1/4$ and $3/4$ are eventually mapped to $0$ by iterates of $E$.
\end{remark}	

Our main result is Theorem~\ref{t:main} below.
In it we prove that for suitable $\alpha$ with $0 < |\alpha| < 1$ and under mild non-resonance conditions on eigenvalues $f'_u (0)$ and $f'_u (1)$ for chosen fixed fibers, the skew product map $F$ from \eqref{e:skew} has metric attractors
$\mathbb{T} \times \{0\}$, $\mathbb{T} \times \{1\}$,
$\mathbb{T} \times \{\infty\}$ with mutually intermingled basins; all three basins have support equal to the entire state space $\mathbb{T} \times \hat{\mathbb{C}}$. 
The non-resonance conditions allow for rational real $\alpha \in (-1,0) \cup (0,1)$ (as used in numerical experiments with Kan's example) with  $L$ odd.
We do not have a proof that the union of the three circles is the likely limit set, see the discussion in Section~\ref{s:lls}.

Let us mention some contexts and studies that have a relation to this paper.
The possible occurrence of intermingled basins for Desboves maps on the real projective plane is discussed
in \cite[Section~6]{MR2378483}; these maps admit two invariant curves that are metric attractors whose basins appear to be intermingled.
The paper \cite{MR1406437} has examples of polynomial maps on $[0,1]$ with Cantor sets as metric attractors that have intermingled basins, and further
discusses corresponding constructions of rational maps on the Riemann sphere, relying on the existence of a Julia set of positive Lebesgue measure.

Starting with papers such as \cite{MR1429333,MR1619744,MR1488318} that explored basic properties, a large body of work on nonautonomous and random holomorphic dynamics has appeared, see for instance \cite{MR3084426,MR2747724}. This includes work on random quadratic maps such as  \cite{MR1619744,MR4403145,MR4803667,MR1721617}. Skew product maps of (real) quadratic maps over expanding circle maps (Viana maps) have also been considered for instance in \cite{MR2018605,MR3007895,MR1471866},
typically with a focus on nonuniform expansion.
%

Families of rational maps with parameter independent fixed points also arise in relaxed Newton maps $z \mapsto z + a P(z) / P'(z)$. Randomized versions occur in contexts of random relaxed Newton methods \cite{MR4268827}.
The paper \cite{MR2871980} investigates a family of rational maps depending on a parameter $a$ with parabolic fixed points at $0$ and $\infty$, for all values of $a$.
Replacing the parameter $a$ by an expression $\alpha \cos (2 \pi u)$ and having $u$ varied through an expanding circle map would give a setting akin to our paper.
	

%

\section{Kan's example on the Riemann sphere}

Consider the skew product system  \eqref{e:skew} of holomorphic maps forced by an expanding circle map. Note that $Df_u (0) = 1 + \alpha \cos (2 \pi u)$ and 
$Df_u(1) = 1 - \alpha \cos (2 \pi u)$.
We assume 
\begin{equation}\label{l:0alpha1}
	0 < |\alpha| < 1,
\end{equation}
so that $Df_u(0)$ and $Df_u(1)$ are not $0$
for all $u \in \mathbb{T}$,  and we assume negative fiber Lyapunov exponents at $p = 0,1$:
\begin{align}\label{eq:negative-lyap-u-at0}
	\lambda(p) \coloneqq \int_{\mathbb{T}} \ln \left| Df_{u}  (p) \right| \, du &=  \int_{\mathbb{T}}  \ln \left| 1 + \alpha \cos(2 \pi u)  \right| \, du  =  \ln \left|  \frac{1 + \sqrt{1-\alpha^2}}{2}  \right| < 0
\end{align}
(see Lemma~\ref{l:integral} in Appendix~\ref{s:integral}), where we note that $\lambda(1)$ equals $\lambda(0)$.



\begin{remark}
	For real $\alpha$ we have that \eqref{eq:negative-lyap-u-at0} holds with  $0<|\alpha| < 1$. 
Take $L$ to be odd, so that $0$ and $1/2$ are fixed points of $E$.
For real $\alpha \in (0,1)$ we find that $0$ is attracting for $f_{1/2}$ while
$1$ is attracting for $f_{0}$. For real $\alpha \in (-1,0)$ this is the other way around.  In general the assumptions \eqref{l:0alpha1} and \eqref{eq:negative-lyap-u-at0} on $\alpha$ imply that $0$ is a hyperbolic attracting fixed point for either $f_0$ or $f_{1/2}$, leaving $1$ to be a hyperbolic attracting fixed point for the other fiber, see Lemma~\ref{l:alpha} in Appendix~\ref{s:integral}. 
\end{remark}	

Having two fixed fibers for which $0$ is attracting in one fiber and $1$ is attracting in the second  fiber, is pivotal in our analysis.
We use general notation and assume fixed points $u_0, u_1 \in \mathbb{T}$ of $E$ with
\begin{equation}\label{eq:u0}
	\ln \left| Df_{u_0}  (0) \right|  < 0, \ln \left| Df_{u_0}  (1)   \right| > 0 \text{ and } 
	\ln \left| Df_{u_1}  (0) \right|  > 0, \ln \left| Df_{u_1}  (1)   \right| < 0.
\end{equation}

For $p$ equal to $0$, $1$ or $\infty$ in the Riemann sphere,
write
\[
W^s(p) \coloneqq \left\{  (u,z) \in  \mathbb{T} \times \hat{\mathbb{C}} \; : \; \lim_{n\to \infty}  f^n_u (z)  = p \right\}
\]
for the basin of attraction of $\mathbb{T} \times \{p\}$.
For  $u \in \mathbb{T}$, define the slice
\[
W^s_u(p) \coloneqq \left\{  (u,z) \in   \{u\} \times \hat{\mathbb{C}} \; ; \; \lim_{n\to \infty}  f^n_u (z)  = p \right\}.
\]
For $p = 0,1,\infty$, write $B^s (p)$ for the support of induced volume on  $W^s(p)$
(the closed set in $\mathbb{T} \times \hat{\mathbb{C}}$ of all points for which any open neighborhood of them contains positive measure of $W^s(p)$).

\begin{remark}\label{r:periodic}	
	Dynamics varies wildly in the fibers. 	
	Periodic fibers and eventually periodic fibers are dense and may show dynamics different from the description in Theorem~\ref{t:main} for typical fibers. 
	For instance for $p = u_0, u_1$, 
	\begin{equation}\label{e:dense}
	\left\{   u \in \mathbb{T} \; ; \;  E^i (u) = p \text{ for some } i \in \mathbb{N} \right\} \text{ is dense in } \mathbb{T}.
	\end{equation}
	The only attracting fixed points of $f^n_{\tilde{u}}$ for $\tilde{u}$ that is eventually mapped to $u_0$
	are $0$ and $\infty$, while the  only attracting fixed points of $f^n_{\tilde{u}}$ for $\tilde{u}$ that is eventually mapped to $u_1$
	are $1$ and $\infty$. 
	
	Additional attractors may occur in some fibers.
	Take for instance $\alpha=0.3$, $L=3$ (values that are also taken for the numerically calculated figures below)   and $\tilde{u} = 1/5$. Then $E^4 (\tilde{u}) = \tilde{u}$. A graphical analysis of $f^4_{\tilde{u}}$ on the real axis shows that $f^4_{\tilde{u}}$ has an attracting fixed point near $4.6015$. Also $0$ is attracting for $f^4_{\tilde{u}}$, but $1$ is a repelling fixed point. 
\end{remark}	

The following lemma shows that  $\mathbb{T} \times \{p\}$, $p = 0,1,\infty$, are metric attractors, and that
 in small tubular neighborhoods of them, large proportions of points are in the basins of attraction. Write $B_R(p)$ for the disc of radius  $R$ in the Riemann sphere around $p$.

\begin{lemma}\label{l:Ws-loc}
	Let $p \in \hat{\mathbb{C}}$ be $0$, $1$, or $\infty$.
For any $\varepsilon>0$ there is $R>0$ so that there is a set $\Lambda_p \subset \mathbb{T}$ with Lebesgue measure at least $1 - \varepsilon$, so that for $u \in \Lambda_p$,
$\{u\}\times B_R(p) \subset W^s_u (p)$.
\end{lemma}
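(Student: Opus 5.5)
The plan is to treat $p=0$ by a Lyapunov exponent/Birkhoff argument, deduce $p=1$ from the symmetry $J$, and handle $p=\infty$ by a separate direct argument in the local coordinate $w=1/z$, because the fiber maps degenerate at $u=1/4,3/4$.

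\emph{The point $p=0$.} Write $a(u)=\alpha\cos(2\pi u)$, so $f_u(z)=z\bigl(1+a(u)\bigr)-a(u)z^2$. Since $|1+a(u)|\ge 1-|\alpha|>0$ by \eqref{l:0alpha1}, we get
\[
\ln|f_u(z)|\le \ln|z|+\ln|1+a(u)|+C|z|\quad\text{for } |z|\le 1,
\]
with $C=|\alpha|/(1-|\alpha|)$. Set $S_n(u)=\sum_{i=0}^{n-1}\ln|1+a(E^iu)|$. The map $E$ preserves Lebesgue measure and is ergodic, and $\ln|1+a|$ is bounded. By Birkhoff and \eqref{eq:negative-lyap-u-at0}, $S_n(u)/n\to\lambda(0)<0$ for a.e.\ $u$. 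Hence $M(u):=\sup_n\bigl(S_n(u)-n\lambda(0)/2\bigr)$ is finite a.e., and there is $K$ with $\Lambda_0:=\{M\le K\}$ of measure at least $1-\varepsilon$. For $u\in\Lambda_0$ and $|z|<R$, I would show by induction that $|f^n_u(z)|\le Re^{K+1}e^{n\lambda(0)/2}$. Indeed, assuming this up to step $n$,
\[
|f^{n+1}_u(z)|\le |z|\,e^{S_{n+1}(u)+C\sum_{i\le n}|f^i_u(z)|},
\qquad C\sum_{i\le n}|f^i_u(z)|\le \frac{CRe^{K+1}}{1-e^{\lambda(0)/2}},
\]
and the last quantity is $\le 1$ once $R$ is small. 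This closes the induction and gives $f^n_u(z)\to 0$ on $\{u\}\times B_R(0)$.

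\emph{The point $p=1$.} This follows by applying the involution $J$. It commutes with $F$, maps $0$ to $1$, and preserves Lebesgue measure on $\mathbb{T}$, so we may take $\Lambda_1=\Lambda_0+1/2$.

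\emph{The point $p=\infty$.} In the coordinate $w=1/z$ the fiber map becomes
\[
g_u(w)=\frac{w^2}{w\bigl(1+a(u)\bigr)-a(u)}.
\]
This map is superattracting at $0$ when $a(u)\ne 0$ but is the identity when $a(u)=0$, so no uniform contraction is available; this is the main obstacle. I would instead bound the measure of the fibers where the orbit can fail to contract. If $|a(u)|\ge 4|w|$, then
\[
|g_u(w)|\le \frac{|w|^2}{|a(u)|-2|w|}\le \tfrac12|w|.
\]
Put $r_n=R\,2^{-n}$ and $B_n=\{u : |a(E^nu)|<4r_n\}$. Since $\{u:|\cos 2\pi u|<\delta\}$ has measure $O(\delta)$ and $E$ preserves Lebesgue measure, $\operatorname{Leb}(B_n)\le C'R2^{-n}$, so $\operatorname{Leb}(\bigcup_n B_n)\le 2C'R$. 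Let $\Lambda_\infty=\mathbb{T}\setminus\bigcup_nB_n$, which has measure at least $1-\varepsilon$ for $R$ small. For $u\in\Lambda_\infty$ and $|w|<R$, induction gives $|g^n_u(w)|\le r_n$: the step uses $|a(E^nu)|\ge 4r_n\ge 4|w_n|$ and the displayed bound. Hence $w_n\to0$, i.e.\ $f^n_u(z)\to\infty$.

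Finally, the disc $\{|w|<R\}$ corresponds to a spherical disc around $\infty$, and spherical and Euclidean discs are comparable near $0$ and $1$. So after shrinking $R$ the statement holds for the discs $B_R(p)$ of the lemma.
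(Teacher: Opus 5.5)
Your proof is correct and follows essentially the paper's route. At $0$ (and $1$) you give a Birkhoff/negative-Lyapunov-exponent argument, which is what the paper delegates to Kan's Lemma~2.2. At $\infty$ you use contraction of $g_u$ in the region $|w|\le c\,|\cos 2\pi u|$, together with shrinking neighbourhoods of $u=1/4,3/4$ of summable measure; your union bound with $r_n=R2^{-n}$ is a tidier version of the paper's Borel--Cantelli and $\Lambda_N$ step, since it handles all times at once and gives $R$ explicitly.

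One small caveat: $J$ commutes with $F$ only for odd $L$, because for even $L$ one has $E(u+\tfrac12)=E(u)$. So for $p=1$ with even $L$, rerun your $p=0$ argument in the coordinate $y=z-1$. There the fiber map is $y\mapsto(1-a(u))y-a(u)y^2$, and $\int\ln|1-a|=\lambda(1)=\lambda(0)<0$.
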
			
		
\begin{proof}
For $p=0$ and $p=1$, the proof follows  \cite[Lemma~2.2]{MR1254075}.

Near $p=\infty$ one treats the fiber maps $f_u$, as usual, in a coordinate $w = 1/z$.
Written in this coordinate $z_{n+1} = f_u(z_n)$ becomes
\begin{align*}  
	w_{n+1}
	&=  g_u (w_n) \coloneqq \frac{w_n^2}{ (1 + \alpha \cos (2 \pi u)) w_n - \alpha \cos (2 \pi u)   }.
\end{align*}
Note that $g_u'(0) = 0$ for $u \in \mathbb{T} \setminus \{ 1/4,3/4\}$ and $g'_{1/4} (0) = g'_{3/4} (0) = 1$.
When $u$ is near $1/4$,
write $g_u(w) = w^2 /  (( 1+ \alpha s ) w - \alpha s)$ for $s = \cos (2 \pi u)$ near $0$.
Note that
\[
g_u' (w) = \frac{w^2 (1 + \alpha s) - 2 \alpha s w}{ (w (1 + \alpha s) - \alpha s)^2 }.
\]
For $w = k s$ we find
\[
g_u'(w) = \frac{ k^2 (1 + \alpha s) - 2 \alpha k }{  (k (1 + \alpha s) -\alpha)^2}.
\]
Because $\alpha$ is fixed, we find that $|g_u'(w)|$ is small for $s, |k|$ near $0$.

A similar analysis applies near $u = 3/4$.
To conclude,
there are two cones
$C_{1/4,K} = \{ (u,w) \in \mathbb{T} \times \mathbb{C} \; ; \; |w| \leq \varepsilon, |w| \leq K |u - 1/4| \}$ and $C_{3/4,K}$ defined analogously, so that on
\[
V \coloneqq \{  (u,w) \in \mathbb{T} \times \mathbb{C} \; ; \;  |w| \leq \varepsilon \} \setminus \left(  C_{1/4,K} \cup  C_{3/4,K} \right),
\]
we have $|g_u'(w)| \leq \lambda$ for some $\lambda <1$.
Consider sets
\[
I_{K,n} := \left[\frac{1}{4} - \frac{\varepsilon}{K} \lambda^n , \frac{1}{4} + \frac{\varepsilon}{K}  \lambda^n  \right] \cup   \left[\frac{3}{4} - \frac{\varepsilon}{K} \lambda^n  , \frac{3}{4} + \frac{\varepsilon}{K} \lambda^n  \right].
\]
Put $G(u,w)\coloneqq (E(u),g_u(w))$. An orbit $(u_n,w_n) = G^n (u_0, w_0)$ with $|w_0| < \varepsilon$ and $u_n \not \in I_{K,n}$ for all $n \geq 0$, is an orbit contained in $V$. 
Note that $u_n \in I_{K,n}$ is equivalent to  $u_0 \in E^{-n} (I_{K,n})$, which is a set of Lebesgue measure $|E^{-n} (I_{K,n})| = |I_{K,n}| = 2 \varepsilon \lambda^n / K$. Since $\sum_{n\geq 0} |I_{K,n}| < \infty$, the Borel-Cantelli lemma gives that for Lebesgue almost all $u_0 \in \mathbb{T}$, $u_n = E^n(u_0) \in I_{K,n}$ occurs at most finitely often.
It follows  that $\Lambda_N=\{u\in \T; E^n(u)\in I_{K,n} \mbox{ for all }n\ge N\}$ 
has positive Lebesgue measure for all $N$ and $|\Lambda_N|\to 1$ as $N\to \infty$. 
For each $N$  there is a positive $d_0>0$ and so that for each $u_0\in \Lambda_N$ and each $w_0$ with $|w_0|<d_0$, 
$(u_n,w_n)=G^n(u_0,w_0)\in V$ for all $n$. 
As $|g_u' (w)| \leq \lambda$ whenever $(u,w) \in V$,
for such initial points $(u_0,w_0)$,  $(u_n,w_n)$ converges to $\mathbb{T} \times \{0\}$ as $n \to \infty$.
\end{proof}			

Taking the union over small positive $\varepsilon$ of the discs $\{u\} \times B_R (p)$ constructed in the above lemma yields a local stable set $W^s_{loc} (p)$.  
The stable sets $W^s (p)$ equal the union of inverse images
	$\cup_{n \in \mathbb{N}} F^{-n} ( W^s_{loc} (p))$, modulo a set of zero measure.

Fatou sets $\mathcal{F}_u$ and Julia sets $\mathcal{J}_u$ are defined as subsets of $\{u\} \times \hat{\mathbb{C}}$ in the usual way \cite{MR1429333,MR1619744}. That is,
\[
\mathcal{F}_u = \left\{  (u,z) \in \{u\} \times \hat{\mathbb{C}} \; ; \;  \left\{ f_u^n \right\} \text{ is normal on some neighborhood of } (u,z) \right\}
\]
and
\[
\mathcal{J}_u = ( \{u\} \times \hat{\mathbb{C}} ) \setminus \mathcal{F}_u.
\]
The Julia set $\mathcal{J}_{u_1}$  is a Jordan curve, see \cite[\S~9.9]{MR1128089}, encircling $1$ and  containing  $0$.
Likewise  $\mathcal{J}_{u_0}$  is a Jordan curve, encircling $0$ and containing $1$.
\begin{figure}[hb]
	
	\includegraphics[width=7cm,height=7cm]{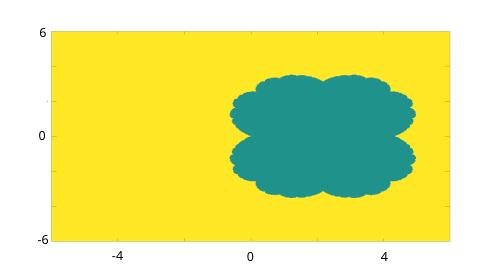} \includegraphics[width=7cm,height=7cm]{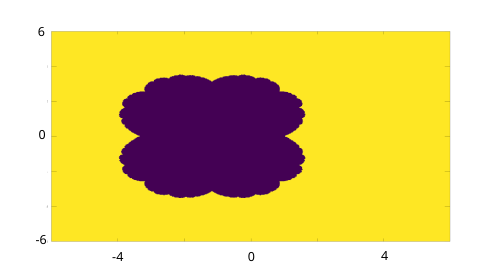}

	\caption{Pictures of filled Julia sets for $\alpha = 0.3$ and $L=3$. The sets $W^s_0(1)$ (light blue region in the left picture) and $W^s_{1/2}(0)$ (dark blue region in the right picture) are bounded by the Julia sets $\mathcal{J}_{0}$ (encircling $1$, containing $0$)  and $\mathcal{J}_{1/2}$ (encircling $0$, containing $1$) respectively.}\label{f:julia}
\end{figure}	
Figure~\ref{f:julia} depicts the filled Julia sets (the closures of $W^s_u(0)$ and $W^s_u(1)$) for the fixed fibers $u=0$ and $u=1/2$ with $\alpha = 0.3$ (recall from Remark~\ref{r:equivariant} that $f_0$ and $f_{1/2}$ are conjugate).
 Figure~\ref{f:1oversqrt3and5} illustrates $W^s_u(0)$ and $W^s_u(1)$ for
 two different irrational values of $u$, also with $\alpha = 0.3$.
\begin{figure}[hbt]
	
	\includegraphics[width=7cm,height=7cm]{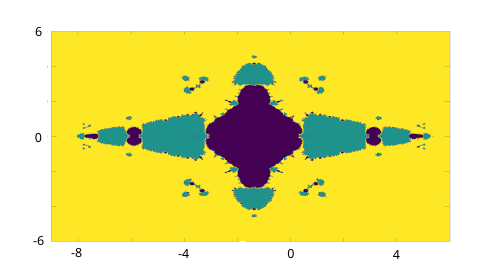} \includegraphics[width=7cm,height=7cm]{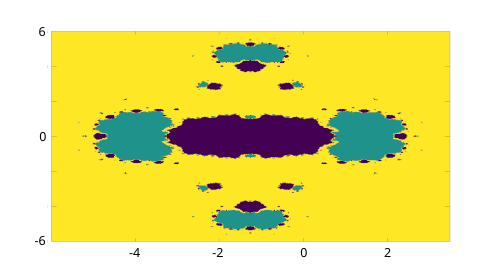}	
	
	\caption{Numerically calculated sets $W^s_u(0)$, $W^s_u(1)$ for two irrational values of $u$, with  $\alpha = 0.3$, $L=3$.
		For the left picture we chose
		$u = 1/\sqrt{3}$, and $u = 1/\sqrt{5}$  for the right picture. Colors are as in Figure~\ref{f:julia}: dark blue for $W^s_u(0)$ and light blue for $W^s_u(1)$. \label{f:1oversqrt3and5}  }
	
\end{figure}

%
%

To make the construction below feasible, we will apart from \eqref{eq:u0} consider conditions on multiplicative independence of the moduli of $Df_{u_0}$ and $Df_{u_1}$ at the fixed points $p = 0,1$:
\begin{align}\label{eq:rat-indep-at0}
	\ln |Df_{u_0} (p)| / \ln|Df_{u_1} (p)| &\not\in \mathbb{Q},
\end{align}	
For $p=0,1$, consider
\[
S_p \coloneqq  \overline{  \left\{ \left(Df_{u_0} (p)\right)^k \cdot \left(Df_{u_1}(p)\right)^\ell \; ; \; k, \ell \in \mathbb{N} \right\} }.
\]
If $\alpha$ is a real number, then \eqref{eq:rat-indep-at0} (assuming also \eqref{eq:u0}) implies that $S_p = [0,\infty)$.

\begin{remark}
Consider for $L$ odd, the fixed points $u_0 = 1/2$ and $u_1 = 0$.	
Then $Df_{u_0}(0) = 1 - \alpha$ and $Df_{u_1} (0)  = 1 + \alpha$.
Now suppose that $\alpha \in \mathbb{Q} \cap (0,1)$, say $\alpha = \frac{a}{b}$ with $b>a >0$ and  $\gcd (a,b)  =1$.
The following reasoning shows that $\ln(1+\alpha)$ and $\ln(1-\alpha)$ are rationally independent.

If not, then we have an equation
\[
m \ln (1 + \alpha) + n \ln (1-\alpha) = 0
\]
for nonnegative integers $m,n$ that are not both zero.
Hence
\[
\big( \frac{b+a}{b}  \big)^m = \big( \frac{b}{b-a}  \big)^n,
\]
or
\begin{equation} \label{e:prime}
(b+a)^m (b-a)^n = b^{n+m}.
\end{equation}
Let $d = \gcd (b+a, b-a)$ and suppose the prime number $p$ divides $d$.
Then $p$ also divides $(b+a) + (b-a) = 2b$ and $(b+a) - (b-a) = 2a$. As $\gcd (a,b) = 1$ the only common prime factor of $b+a$ and $b-a$ is $2$.
From \eqref{e:prime} we find that $a=0$. This was excluded.
So,  \eqref{eq:rat-indep-at0}  holds for the case $L$ odd, $u_0=1/2$, $u_1 = 0$, and $\alpha$ rational  in $(0,1)$.
\end{remark}

Recall that real numbers $\{a,b,c\}$ are called incommensurable if their quotients
$a/b, a/c, b/c$ are irrational.
For non-real derivatives at the fixed points we  assume in addition to the conditions on the moduli of the derivatives the following conditions on their arguments: for $p=0,1$, 
\begin{align}\label{e:argat0}
	\left\{ \arg (Df_{u_0} (p)) ,  \arg (Df_{u_1} (p)) , 2 \pi \right\} & \text{ are incommensurable}. 
\end{align}
%
%
%
%
%
%
%
Conditions \eqref{eq:rat-indep-at0} and \eqref{e:argat0} together (again with \eqref{eq:u0}) imply that $S_p = \mathbb{C}$, $p=0,1$.

The following lemma gives local linearizing coordinates near $(\hat{u},0)$ with $\hat{u}$ a fixed point of $E$. We assume $\hat{u} \neq 1/4,3/4$ to avoid derivatives $Df_{\hat{u}} (0)$ equal to $1$. To avoid excessive notation we only consider the fixed point $(\hat{u},0)$, but completely analogous results hold near $(\hat{u},1)$.
Write
\[
\eta(u) := Df_u (0).
\]
The lemma says that near $(\hat{u},0)$ we may find a coordinate change
turning the skew product system into
\[
(u,z) \mapsto (E(u) , \eta(\hat{u}) z).
\]
%
%
The possibility  of an analytic linearization is covered by Siegel's theorem, see \cite{MR947141}. It requires Diophantine conditions on eigenvalues and is not always possible in our set-up.  We therefore
use complex coordinates $(u,z,\bar{z})$ and consider $C^k$ for high $k$, but possibly non-analytic, coordinate transformations $H (u,z,\bar{z}) = (u , H_u(z,\bar{z}))$.


\begin{lemma}\label{l:lin}
		Let $\hat{u}$ be a fixed point of $E$, different from $1/4$ and $3/4$.
	For any positive integer $k$ one has that for large enough $L$, the following holds:
	
	\begin{enumerate}
	\item
	 There are $C^k$ linearizing coordinates $(u,z,\bar{z}) \mapsto H(u, z,\bar{z}) := (u , H_u(z,\bar{z}))$  near the fixed point $(\hat{u} , 0)$.
	Here $H_u(0,0)=0$, $D_z H_u(0,0)= 1$, $D_{\bar{z}} H_u(0,0) = 0$ and
	\[
	H_{E(u)}^{-1}  \big( \eta(\hat{u})   H_u (z,\bar{z}) \big) = f_u(z).
	\]	
	Likewise there are $C^k$ linearizing coordinates near the fixed point $(\hat{u} , 1)$;

 \item
	There are $C^k$ coordinates $(u,z,\bar{z}) \mapsto H(u, z,\bar{z}) = (u , H_u(z,\bar{z}))$ near the fixed point $(\hat{u} , \infty)$, in which the map $F$ becomes $(u,w) \mapsto (E(u), w^2)$.
\end{enumerate}
\end{lemma}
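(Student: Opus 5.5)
This is a skew-product linearization in the spirit of invariant-manifold / normal-form theory for maps fibered over an expanding base. The expansion of $E$ (with $L$ large) dominates every power of the contraction or expansion of $f_u$ near the fixed point, so the cohomological equations can be solved by iterating forward along the base. The plan has three steps: polynomial normal form, a contraction argument for the remainder, and then item (2).

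\textbf{Step 1: polynomial normal form.} Localize to a small interval $U$ around $\hat u$ on which $E$ is invertible with $E(U)\supset U$ and $\hat u\in U$. Write
\[
f_u(z)=\eta(u)z+\alpha\cos(2\pi u)\,(-z^2).
\]
First remove the dependence of the linear part on $u$. Seek $h_u(z)=c(u)z$ solving
\[
c(E u)\,\eta(u)=\eta(\hat u)\,c(u),
\]
that is,
\[
c(u)=\prod_{j\ge0}\frac{\eta(\hat u)}{\eta(E^{-j}_{\mathrm{loc}}u)},
\]
using the local inverse branch $E^{-1}_{\mathrm{loc}}$ contracting toward $\hat u$. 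The product converges because $E^{-j}_{\mathrm{loc}}u\to\hat u$ exponentially, at rate $L^{-j}$. It is $C^k$ in $u$ provided $L^{-k}$ times the ratios stays below $1$, which holds for $L$ large. Next, kill the terms $z^a\bar z^b$ with $2\le a+b\le N$ one order at a time. Each order gives a twisted cohomological equation
\[
\phi(E u)\,\eta(\hat u)-\eta(\hat u)^a\,\overline{\eta(\hat u)}^{\,b}\,\phi(u)=\psi(u).
\]
Solve it by the backward (or forward) iteration along $E^{-1}_{\mathrm{loc}}$. The fixed-point map has contraction factor roughly $\max(|\eta|^{a+b-1},|\eta|^{-1})\cdot L^{-j}$ after accounting for $C^j$ norms. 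This is where the large-$L$ hypothesis enters: no nonresonance condition on $\eta(\hat u)$ is needed, because the base expansion $L^{-1}$ beats all the finitely many ratios $|\eta|^{a+b-1}$ with $a+b\le N$.

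\textbf{Step 2: the flat remainder.} After Step 1 the map is $z\mapsto \eta(\hat u)z+R_u(z,\bar z)$ with $R=O(|z|^{N+1})$ in $C^k$. Solve $H_{E(u)}^{-1}(\eta(\hat u)H_u)=f_u$ for $H_u=\mathrm{id}+\Delta_u$ by a contraction mapping on the space of $C^k$ functions flat to order $N$ at $z=0$ on $U\times B_r(0)$. Since $|\eta(\hat u)|\neq1$, treat the contracting and expanding cases separately, iterating $f$ or its inverse respectively. This is the standard Sternberg-type argument. Choosing $N$ large compared with $k$ and $\log|\eta|$, and $L$ large for the $u$-derivatives, makes the operator contracting in $C^k$. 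The fixed point $(\hat u,1)$ is handled identically. Throughout, the normalizations $H_u(0,0)=0$, $D_zH=1$, $D_{\bar z}H=0$ are imposed by the choice in Step 1. I expect this step to be the main obstacle: one has to control the $u$-derivatives through the composition with $E^{-1}_{\mathrm{loc}}$, where each $u$-derivative gains a factor $L^{-1}$, and keep this bookkeeping compatible with the spatial flatness estimates.

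\textbf{Step 3: item (2), the fixed point at $\infty$.} Work in the coordinate $w=1/z$ with
\[
g_u(w)=\frac{w^2}{(1+\alpha\cos 2\pi u)w-\alpha\cos 2\pi u}.
\]
At $\hat u\neq1/4,3/4$ this is a superattracting germ
\[
g_u(w)=-\frac{w^2}{\alpha\cos(2\pi u)}\,\bigl(1+O(w)\bigr).
\]
Use a fibered Böttcher coordinate
\[
\phi_u(w)=\lim_{n\to\infty}\bigl(g^n_u(w)\bigr)^{1/2^n}\cdot(\text{normalizing constants}).
\]
The scaling $w\mapsto c(u)w$, with $c(Eu)=c(u)^2\cdot(-\alpha\cos 2\pi u)^{-1}$, is solved by the same backward-iteration product. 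The remaining factor $(1+O(w))^{1/2^n}$ converges holomorphically in $w$. Its $u$-derivatives are controlled because the $2^{-n}$ exponents combine with $L^{-n}$ from $E^{-1}_{\mathrm{loc}}$, so $L$ large again yields $C^k$ dependence on $u$. The result is the form $(u,w)\mapsto(E(u),w^2)$.
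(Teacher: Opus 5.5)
\textbf{There is a genuine gap in Steps 2 and 3.} Your Step 1 can be made to work. The value of each coefficient at $\hat u$ is fixed algebraically, which is possible because $|\eta^a\bar\eta^b|=|\eta|^{a+b}\neq|\eta|$ rules out resonances in the fiber. Large $L$ then controls the $u$-derivatives.

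Steps 2 and 3, however, break down in exactly the case the paper needs: a contracting fixed point such as $(u_0,0)$ with $|\eta(\hat u)|<1$, and likewise the superattracting point at $\infty$. The conjugacy equation $H_{E(u)}\circ f_u=\eta(\hat u)H_u$ ties the direction of iteration in the fiber to the direction in the base. So you cannot iterate $f$ forward in $z$ while gaining $L^{-1}$ from $E^{-1}_{\mathrm{loc}}$ in $u$.

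Forward iteration fails:
\begin{itemize}
\item The operator $\Delta\mapsto\eta^{-1}(\Delta_{E(u)}\circ\tilde f_u+R_u)$ needs $\Delta$ on $E(U)$, which is strictly larger than $U$, so it does not act on functions on $U\times B_r(0)$.
\item If you globalize in $u$, the $u$-derivatives of $\Delta_{E^nu}\circ\tilde f^n_u$ grow like $L^n$, so large $L$ hurts.
\item Compensating with $N\gg k\log L/|\log|\eta(\hat u)||$ contradicts your Step 1, which needs $L$ to dominate $|\eta|^{-(a+b-1)}$ for all $a+b\le N$.
\end{itemize}

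Backward iteration also fails:
\begin{itemize}
\item It keeps $u$ in $U$, but $\tilde f_u^{-1}$ pushes $z$ out of $B_r(0)$.
\item The $u$-independent part of the flat remainder does not vanish at $u=\hat u$. Per step it is multiplied by $|\eta|\cdot|\eta|^{-(N+1)}=|\eta|^{-N}>1$, with no compensating factor $L^{-1}$. Taking $N$ larger makes this worse.
\end{itemize}

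Step 3 has the same defect. The limit $\lim_n(g^n_u(w))^{1/2^n}$ runs along the forward orbit $E^n u$. That orbit leaves every neighborhood of $\hat u$, and for typical $u$ comes arbitrarily close to $1/4$ and $3/4$, where $g_u$ is nearly the identity. Its $u$-derivative is a series with terms of size about $2^{-n}L^n$, which diverges for $L>2$. There is no $E^{-1}_{\mathrm{loc}}$ in that formula to supply the gain you invoke.

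\textbf{Missing idea: separate the fixed fiber from the $u$-dependence, as the paper does.}
\begin{enumerate}
\item In the invariant fiber $\{\hat u\}\times\hat{\mathbb{C}}$, forward iteration is harmless. K\oe{}nigs (B\"ottcher at $\infty$) gives $h_{\hat u}$ with $h_{\hat u}\circ f_{\hat u}=\eta(\hat u)h_{\hat u}$.
\item Remove the $u$-dependence by a fibered conjugacy $I_u$ with $I_{E(u)}\circ f_u=f_{\hat u}\circ I_u$. Build it by \emph{backward} base iteration, $I_v=\lim_n f^n_{\hat u}\circ(f^n_{E^{-n}v})^{-1}$, after a cutoff in $z$ when $|\eta(\hat u)|<1$. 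This is the holonomy of the strong unstable foliation.
\item Here each step genuinely gains $L^{-1}$, because the discrepancy $f_u-f_{\hat u}$ is $O(|u-\hat u|)$. For $L$ large the spectral gap makes $I$ of class $C^k$.
\item Then $H_u=h_{\hat u}\circ I_u$.
\end{enumerate}
Your linear cocycle $c(u)$ is just the linear part of this holonomy; note that the product should start at $j=1$.

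One further correction. Differentiating the conjugacy at $z=0$ gives $D_zH_{E(u)}(0,0)\,\eta(u)=\eta(\hat u)\,D_zH_u(0,0)$. So $D_zH_u(0,0)=1$ can hold only at $u=\hat u$, not ``throughout'' as you claim. This defect is already present in the statement itself.
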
	

\begin{proof}
	It is a classical result (K\oe{}nigs linearization
	theorem, see for instance \cite[Theorem~6.3.3]{MR1128089} or \cite[Theorem~6.1]{MR2193309}) that the holomorphic map $f_{\hat{u}}$ near the fixed point $0$ can be locally linearized by an analytic coordinate change.
	We thus have a locally analytic map $h_{\hat{u}}$ with $h_{\hat{u}}(0)=0$, $D h_{\hat{u}}(0)= 1$ so that
	\[
	h_{\hat{u}}^{-1}  \big( \eta(\hat{u})   h_{\hat{u}} (z) \big) = f_{\hat{u}}(z).
	\]	
			
The linearizing coordinates can be extended to values of $u$ near $\hat{u}$ through the  construction of a strong unstable foliation for $F$ near $(\hat{u},0)$. Such a strong unstable foliation is $C^k$ for $L$ high by spectral gap conditions.
(The foliation is obtained from a section of a Grassmannian bundle, this section defines a normally hyperbolic manifold for the lifted system whose smoothness is given by spectral gap conditions, see also \cite{MR501173}.)
The foliation defines  $C^k$ holonomy maps $I_{u,\hat{u}} : \{u\} \times V \to \{\hat{u} \} \times \mathbb{C}$ defined for $u$ near $\hat{u}$ and $V$ a small open neighborhood of $0$.
%
%
With a slight abuse of notation,
the coordinate transformation is given by
\[
H_u (z , \bar{z}) \coloneqq h_{\hat{u}} \circ I_{u,\hat{u}} (z, \bar{z}).
\]

%
%

The proof for coordinates near  $(\hat{u} , \infty)$ copies the argument, replacing K\oe{}nigs linearization
theorem for a local normal form near a hyperbolic point by B\"ottcher's theorem, see \cite[Theorem~6.7]{MR2193309}.
\end{proof}


\begin{remark}
An alternative normal form near $(\hat{u},0)$
turns the skew product system into
\[
(u,z) \mapsto (E(u) , \eta(u) z).
\]
%
In more detail, for $u$ near $\hat{u}$ and $z$ near $0$ there is a $u$-dependent  coordinate $h_u(z,\bar{z})$, $h_u (0,0)=0$, $D_zh_u(0,0) = 1$, $D_{\bar{z}} h_u (0,0) = 0$ so that
\[
 h_{E(u)}^{-1}  \big( \eta(u)   h_u (z,\bar{z})  \big) = f_u(z).
\]
This may be compared to a global (in $u$) linearization as in  \cite{MR2158403}, that depends measurably on $u$. See also \cite{MR3517619}.
%
%
%

To obtain this normal form, we must find a conjugation between
the skew products $(u,z) \mapsto (E(u) , \eta (\hat{u}) z)$ and $(u,z) \mapsto  (E(u) , \eta(u) z)$ near $(\hat{u},0)$.
To see this we use a coordinate change $I(u,z) = (u , i_u z)$ (so linear but $u$-dependent in the $z$-coordinate). A conjugation means
\[
 \eta (0) z = \frac{1}{i_{E(u)}} \eta (u) i_u z,
\]
and thus
	\[
	i_{E(u)} = \frac{\eta(u)}{\eta(u_0)}  i_u.
	\]
Consider the system $(u,i) \mapsto \left(E(u) ,  \frac{\eta(u)}{\eta(u_0)}  i \right)$. The sought for conjugation is determined by the strong unstable manifold through the point $(0,1)$.
%
\end{remark}

\begin{lemma}\label{l:cones}
	Let $p \in \{0,1,\infty\}$ and $(u,z) \in B^s(p)$. Let $O$ be a neighborhood of $(u,z)$.

	Assume $p \in \{0,1\}$. Then for each $\delta >0$ there is $N>0$ so that $F^N(O)$ contains a curve $\gamma$ that is $\delta$-$C^1$ close to $\mathbb{T} \times \{p\}$.

	Assume $p=\infty$ and let $J \subset \mathbb{T}$ be a closed interval disjoint from 
	$\{1/4,3/4\}$. Then for each $\delta >0$ there is $N>0$ so that $F^N(O)$ contains a curve $\gamma$ that is $\delta$-$C^1$ close to $J \times \{p\}$.
\end{lemma}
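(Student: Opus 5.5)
Since $(u,z)\in B^s(p)$, the set $O\cap W^s(p)$ has positive measure. By Lemma~\ref{l:Ws-loc} and the remark following it, $W^s(p)$ coincides up to a null set with $\cup_n F^{-n}(W^s_{loc}(p))$. Hence there are $n_0$ and a set $P\subset O$ of positive measure with $F^{n_0}(P)\subset \{(v,w)\,;\, v\in\Lambda_p,\ w\in B_R(p)\}$. Shrinking $O$ first, we may assume $F^{n_0}$ is a local diffeomorphism on $O$; this only requires avoiding critical points of fiber maps and the degenerate fibers $u=1/4,3/4$, which form a null set. So $F^{n_0}(O)$ contains an open set $O'$ meeting the local stable set in positive measure. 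By Lebesgue density we can pick $O'$ to contain a small ``horizontal'' curve $\gamma_0=\{(v,w_0+\phi(v))\,;\,v\in I\}$, with $I$ a short interval, $\phi$ a constant (or $C^1$-small) function, and $w_0$ close to $p$. Moreover $\gamma_0$ can be chosen so that all its points have forward orbits staying in the region where the contraction estimates of Lemma~\ref{l:Ws-loc} hold.

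Next we iterate $\gamma_0$ and show that its images become long in $u$ and flat in $z$. In the base, $E^n(I)$ covers $\mathbb{T}$ once $L^n|I|>1$. In the fiber direction, parametrize the image curve as a graph $v\mapsto z_n(v)$ over $E^n(I)$, in local linearizing coordinates if convenient (Lemma~\ref{l:lin}). By the chain rule, with $v_n=E^n(v)$,
\[
\frac{dz_{n+1}}{dv_{n+1}}=\frac{1}{L}\Big(Df_{v_n}(z_n)\frac{dz_n}{dv_n}+\partial_u f_{v_n}(z_n)\Big).
\]
Near $p=0,1$ we have $|Df_u|\le 1+|\alpha|<2\le L$ and $\partial_u f_u(z)=O(|z-p|)$. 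Since $z_n\to p$ along the curve, the slopes contract geometrically and the additive term tends to zero. This is the usual cone-field invariance: horizontal cones are preserved and the slope tends to $0$. Together with $\sup|z_n-p|\to0$, this shows $F^{n}(\gamma_0)$ is $\delta$-$C^1$ close to $\mathbb{T}\times\{p\}$ for $n$ large. Here one uses that $\gamma_0$ lies in the local stable set, so the fiber coordinates along it converge to $p$, uniformly after a further restriction to a sub-curve by Egorov.

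For $p=\infty$ we work in the coordinate $w=1/z$. Away from $u=1/4,3/4$ we have $g_u'(w)=O(|w|)$, and $\partial_u g_u(w)=O(|w|^2)$ for $u$ bounded away from those points. Near $1/4,3/4$ the derivative is not small, which is why the conclusion is restricted to $J$. The plan is to take the sub-curve of $F^n(\gamma_0)$ lying over $J$ and estimate the slope along its backward history. Orbits of points of $\gamma_0$ in $\Lambda_p$ visit the cones $C_{1/4,K},C_{3/4,K}$ only finitely often, and in $V$ we have $|g'|\le\lambda<1$.

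The main obstacle is uniformity: the times at which orbits enter neighbourhoods of $1/4,3/4$ vary along the curve. I would handle this by restricting $\gamma_0$ to a sub-curve over a set $\Lambda_N$ of the proof of Lemma~\ref{l:Ws-loc}, chosen by Lebesgue density. One cannot simply take a positive-measure set, since a curve needs an interval of $u$. Instead, choose $\gamma_0$ so close to $p$ that $F^n(\gamma_0)$ remains in the region where the slope recursion contracts regardless of $u$. For $p=\infty$ this means the part of the image over $J$, where the factor $1/L$ beats the bounded $|g'|$ near $\pm1/4$ when $L$ is large. Then the slope bound propagates inductively.
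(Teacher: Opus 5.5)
There is a genuine gap, and it sits exactly at the uniformity issue you flag. Your argument needs the whole iterated curve $F^n(\gamma_0)$ to stay in a small neighbourhood $\mathbb{T}\times B$ of $\mathbb{T}\times\{p\}$, so that $z_n\to p$ along it and your slope recursion contracts. You justify this in three ways: $\gamma_0$ lies in the local stable set, uniformity comes from Egorov on a sub-curve, and (in the last paragraph) $\gamma_0$ is chosen close enough to $p$. None of these can work for a fixed curve over an interval $I$.

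Take $p=0$ (symmetrically $p=1$ with $u_0$). The preimages of the fixed fibre $u_1$ are dense in $I$ (see \eqref{e:dense}), and $0$ is a repelling fixed point of $f_{u_1}$. Every $f_v$ fixes $0$ and is injective near it. So if $F^n(\gamma_0)\subset\mathbb{T}\times B$ for all $n$, then over each $v\in I$ with $E^j(v)=u_1$ the fibre coordinate at time $j$ must be $0$, hence already $0$ at time $0$. By density and continuity, $\gamma_0\subset I\times\{0\}$. Consequently:
\begin{itemize}
\item $\Lambda_p$ contains no interval, and no non-trivial curve lies in $W^s_{loc}(p)$;
\item Egorov yields only a large-measure subset of $I$, not a subinterval;
\item $\sup|z_n-p|\to0$ along $F^n(\gamma_0)$ is false.
\end{itemize}

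The missing idea, which is how the paper argues, is to let the initial segment depend on the target time $N$ and to use only \emph{one} good orbit. Pick $(u,z)\in O\cap W^s(p)$; after iterating, its orbit lies in $\mathbb{T}\times B$. Take the horizontal segment $I_N\times\{z\}\subset O$ with $E^N(I_N)$ of length one. For $L$ large, cone invariance holds on $\mathbb{T}\times B$, since $\partial_u f_u=\mathcal{O}(|z-p|)$ and $|Df_u|<2$ there. So $F^j(I_N\times\{z\})$ is a graph of slope at most $a$ over an interval of length $L^{j-N}$. For $j\le N$ it therefore stays within $aL^{j-N}$ of the single orbit point $F^j(u,z)$, which tends to $\mathbb{T}\times\{p\}$. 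Only then does your slope recursion give slope $\to 0$ at time $N$.

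For $p=\infty$ there is an additional error: $|g_u'|$ is \emph{not} bounded near $u=1/4,3/4$. With $s=\cos(2\pi u)$, $g_u(w)=w^2/((1+\alpha s)w-\alpha s)$ has a pole at $w=\alpha s/(1+\alpha s)$, which tends to $0$ as $s\to 0$. These are the points sent to $z=0$; this is the discontinuity of $F$ at $(1/4,\infty)$ discussed in Section~\ref{s:lls}. Hence no choice of $L$ lets the factor $1/L$ carry a slope bound through neighbourhoods of $u=1/4,3/4$. Looking only at the part of $F^n(\gamma_0)$ over $J$ at the final time does not control that part's backward history, which may have passed through these regions. Likewise $\Lambda_N$ is not an interval, so restricting $\gamma_0$ to a sub-curve over $\Lambda_N$ is not available. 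What is needed, as in the paper, is:
\begin{itemize}
\item follow a single point of $W^s(\infty)$ whose orbit avoids the shrinking intervals $I_{K/2,n}$ for all $n$;
\item at every step, cut the image curve at $I_{K,n}$ and keep only the component $J_n$ containing that orbit;
\item prove $\liminf_n |J_n|>0$ using $\sum_n\lambda^n<\infty$ and the linear expansion of $E$.
\end{itemize}
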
		

\begin{proof}
	Assume first $p \in \{0,1\}$.
	The reasoning is based on constructions with invariant cone fields, classical in dynamical systems theory \cite{MR1326374}.
	For $(u,z) \in \mathbb{T}\times \mathbb{C}$ and $a>0$ we consider the cone $C_a \subset T_{(u,z)} \left(\mathbb{T}\times \mathbb{C}\right)$ with diameter $a$: identifying
	 $T_{(u,z)} \left(\mathbb{T}\times \mathbb{C}\right)$ with $\mathbb{R}\times \mathbb{C}$, 
	 $C_a$ is given by \[C_a \coloneqq \{ (v,w) 	\in \mathbb{R} \times \mathbb{C} \; ; \; |w| \leq a |v| \}.\] 
	Write $B$ for a small ball around $p$ in $\hat{\mathbb{C}}$. 
	On $B$, $|Df_u |$ is bounded, uniformly in $u$. Hence,
	for $L$ large enough, $DF (u,z)$ with $(u,z) \in \mathbb{T} \times B$ maps $C_a$ inside itself. 
	
	For $n$ large there are intervals $I_n$ containing $u$ so that $I_n \times \{z\} \subset O$ and $E^{n} (I_n)$ has length  one.  
	For $m$ large enough we find $f^m_u(z) \in B$. 
	There is $a>0$ so that for any large $n$, $DF^n (u,z) (1,0) \subset C_a$ for any $u \in I_n$. Lemma~\ref{l:lin}, noting that $f_u(p)=p$ for all $u \in \mathbb{T}$,  makes clear that $F^n (I_n \times \{z\})$ is a curve
	accumulating onto $\mathbb{T} \times \{p\}$ with $DF^n (u,z) (1,0)$ going to zero for $u \in I_n$ as $n\to \infty$.
	This implies the lemma for $p \in \{0,1\}$.

	
	For $p = \infty$ the reasoning is similar.  Introduce a coordinate $w = 1/z$ as in the proof of Lemma~\ref{l:Ws-loc}.
	Let $I_{K,n}$ be as in the proof of Lemma~\ref{l:Ws-loc}. 
	Because $(u,z) \in B^s(\infty)$, we may, by iterating, 
	assume that $z$ is close to $\infty$, that is, $w = 1/z$ is close to $0$.
	We may moreover replace $(u,w)$ by a point in $W^s (\infty)$, which we again denote by $(u,w)$, so that
	$E^n (u) \in I_{K/2,n}$ at most finitely often. By iterating we can then also assume that  $E^n (u) \not \in I_{K/2,n}$ for all $n \geq 0$.
%
	At points that are close to $\mathbb{T} \times \{\infty\}$ but outside of the set $C_{1/4,K} \cup C_{3/4,K}$, there are invariant cones $C_a$ as above. One can therefore iterate a curve near $(u,w)$ with tangent lines in the cones $C_a$: as long as the image is outside $C_{1/4,K} \cup C_{3/4,K}$ this maps to a curve with tangent lines in the cones $C_a$. Take such a curve given as a graph defined over a small interval $J \subset \mathbb{T}$ containing $u$.
	Let $J_n \subset E^n(J)$ be the connected component of $E(J_{n-1}) \cap \left( \mathbb{T} \setminus I_{K,n} \right)$ that contains $u_n$. We claim that $\liminf_{n\to \infty} |J_n| \geq c$ for some $c>0$.
	Note that $|J_n| = L |J_{n-1}|$ unless $E(J_{n-1})$ intersects $I_{K,n}$. In that case, as $u_n$ is outside $I_{K/2,n}$  we have $|J_n| \geq d \lambda^n$ for some $d>0$. 
	Further iterates increase the length, and since $E$ is expanding linearly, we have that $|J_m|$ is uniformly bounded away from zero for some $m > n$, and
	$I_j$ does not intersect $I_{K,j}$ for $n < j \leq m$.
	This implies the claim. Recall that inverse images of fixed points of $E$ are dense in $\mathbb{T}$ (see \eqref{e:dense}).
	We may thus take $J$ so that $J_m$ contains a fixed point other than $1/4$ or $3/4$. The lemma for $p=\infty$ follows as above.
\end{proof}	


The following result is the main result of this paper. It states that the basins of attraction of $\mathbb{T} \times \{0\}$, $\mathbb{T} \times \{1\}$ and $\mathbb{T} \times \{\infty\}$ are all dense in the entire state space $\mathbb{T} \times \hat{\mathbb{C}}$. This means that the basins are intermingled: every open set intersects the supports of all basins. 
In addition we prove that $F$ is topologically mixing, so in particular topologically transitive. This means that from a topological point of view the dynamics differs from the above description: there is a residual set of points with dense orbits \cite[Section~18]{MR584443}.
Note that the theorem does not include a statement that the three attractors form the likely limit set.

\begin{theorem}\label{t:main}
Consider the skew product system $F: \mathbb{T} \times \hat{\mathbb{C}}\righttoleftarrow $ given by \eqref{e:skew}, with $f_u : \hat{\mathbb{C}} \righttoleftarrow$ given by \eqref{e:fu}. Assume $0 < |\alpha | < 1$ and \eqref{eq:negative-lyap-u-at0}. 
	
	Let also $u_0 = E(u_0)$ and $u_1 = E(u_1)$ be as above. 
	For $\alpha$  real, assume \eqref{eq:u0}, 
    \eqref{eq:rat-indep-at0}. 
	For $\alpha$ non-real, assume in addition \eqref{e:argat0}. 
	
	Then for large enough and odd $L$,  the following holds:
	\begin{enumerate}
		
	\item	
	For $p = 0,1,\infty$,
	\[
	B^s(p) = \mathbb{T} \times \hat{\mathbb{C}},
	\]
	so that $\mathbb{T} \times \{0\}$, $\mathbb{T} \times \{1\}$ and $\mathbb{T} \times \{\infty\}$ are {metric} attractors
	with intermingled basins;


    \item
	$F$ is topologically mixing.

	\end{enumerate}
\end{theorem}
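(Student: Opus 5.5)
The plan is to follow Kan's original strategy, pushing curves close to the invariant circles and then transporting them across fibers. Lemma~\ref{l:cones} supplies the first half of this, and the linearizations of Lemma~\ref{l:lin}, together with the density of the sets $S_p$, supply the second.

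\emph{Step 1 (each basin has nonempty interior in its support near the other circles).}

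We first show that $B^s(\infty)$, $B^s(0)$ and $B^s(1)$ are all nonempty. For each this follows from Lemma~\ref{l:Ws-loc}, which gives positive measure for each basin.

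Next, $B^s(p)$ is closed and forward invariant up to measure: $F$ is a local diffeomorphism off the critical fibers and preserves null sets. It is also backward invariant. Hence it suffices to show that $B^s(p)$ contains an open set $U$ whose iterates $F^N(U)$ eventually cover $\mathbb{T}\times\hat{\mathbb{C}}$ in the appropriate sense. More precisely, it suffices to show that every open $O$ has some iterate meeting $W^s(p)$ in positive measure.

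\emph{Step 2 (crossing from $0$ to $1$ and to $\infty$).}

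Take $(u,z)\in B^s(0)$ and a neighborhood $O$. By Lemma~\ref{l:cones}, $F^N(O)$ contains a curve $\gamma$ that is $\delta$-$C^1$ close to $\mathbb{T}\times\{0\}$; thickening slightly gives an open tube.

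Now work in the fixed fiber $u_1$, where $0$ is repelling by \eqref{eq:u0}. Use the linearizing coordinates of Lemma~\ref{l:lin} near $(u_1,0)$ and $(u_0,0)$. Points of $\gamma$ whose $u$-coordinate follows $u_0$ for $k$ steps and $u_1$ for $\ell$ steps (cylinder sets of $E$) get multiplied, in linear coordinates, by $\eta(u_0)^k\eta(u_1)^\ell$.

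By \eqref{eq:rat-indep-at0}, and \eqref{e:argat0} in the non-real case, these products are dense in $S_0=\mathbb{C}$ (respectively in the real line for real $\alpha$, combined with the fact that the tube is two-dimensional in $z$). So from a tube at tiny distance $\rho$ from $0$ we can reach, near the fiber $u_1$, any prescribed point at a fixed moderate distance from $0$ in the linearization domain.

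The Julia set $\mathcal{J}_{u_1}$ is a Jordan curve through $0$ bounding $W^s_{u_1}(1)$. So points in the linearization domain of $(u_1,0)$ lie in the immediate basin of $1$ for $f_{u_1}$, and also in the basin of $\infty$, on opposite sides of the curve. Choosing a target point in either open region, and continuing with more iterates along $u_1$ (robust by continuity and by the positive measure sets $\Lambda_p$ of Lemma~\ref{l:Ws-loc}), lands an open piece of $F^{N'}(O)$ inside $W^s(1)$, respectively $W^s(\infty)$, with positive measure.

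Symmetrically, starting from $B^s(1)$ and using $u_0$, we reach $0$ and $\infty$. Starting from $B^s(\infty)$, Lemma~\ref{l:cones} gives a curve near $J\times\{\infty\}$ passing over $u_0$ or $u_1$, and the B\"ottcher coordinates of Lemma~\ref{l:lin}(2) let us pull back into a neighborhood of $\mathcal{J}_{u_0}$. Alternatively, use that every point of $\mathcal{J}_{u_0}$ is accumulated by $W^s_{u_0}(0)$, and that points with $E$-orbit staying near $u_0$ map large discs outside near $\infty$.

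\emph{Step 3 (conclude).}

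Combining these, every open set meeting one $B^s(p)$ meets all three. Since the three basins together carry full measure near the invariant circles, and preimages of local stable sets are dense (by density of preimages of fixed fibers \eqref{e:dense}), each open set meets some $B^s(p)$ and hence all of them. This proves part~(1).

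For part~(2), the same curve-stretching shows that for any open $O,V$ we have $F^n(O)\cap V\neq\emptyset$ for all large $n$. The reasoning is that $F^n(O)$ contains curves $C^1$-close to full circles $\mathbb{T}\times\{p\}$, whose subsequent images under $E$-cylinders fill fibers densely via the density argument.

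\emph{Main obstacle.}

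The main obstacle is Step~2: making rigorous that the dense multiplier set $S_p$ lets an open piece of the thin tube be placed at a prescribed location in the $u_1$ linearization domain. This requires uniformity of the $C^k$ linearizations over $u$ near $u_0,u_1$, and control of the transitions between the two fixed fibers, where the $u$-orbit passes between the neighborhoods in boundedly many steps.

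I would handle this by choosing $E$-cylinders of points staying exactly $k$ steps near $u_0$, then a bounded transition word, then $\ell$ steps near $u_1$. The bounded transition contributes a fixed diffeomorphism whose distortion is absorbed by taking $\rho$ small.
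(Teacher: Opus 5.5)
There is a genuine gap, in two places.

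\textbf{The real case of Step~2.} Your claim that one can reach ``any prescribed point at a fixed moderate distance from $0$'' fails for real $\alpha$, which the paper treats as the harder case. There $\eta(u)=1+\alpha\cos(2\pi u)>0$ and every $f_u$ is a real polynomial. So near $\mathbb{T}\times\{0\}$ the dynamics, including your transition words, preserves $\arg z$ to first order, and $\eta(u_0)^k\eta(u_1)^\ell$ only fills $[0,\infty)$. From a tube point you reach only the ray $a\mapsto\mathcal{C}_{u_1}(a)$ of Lemma~\ref{l:help}. Thickening the tube only gives a thin cone whose direction is dictated by $O$, not chosen by you.

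This matters. In the K\oe{}nigs coordinate of $f_{u_1}$ at $0$, the Julia set is invariant under multiplication by $\eta(u_1)>1$. The negative real axis is an invariant ray in $W^s_{u_1}(\infty)$, and the positive real axis is one in $W^s_{u_1}(1)$. By compactness of a fundamental annulus, whole cones around these rays lie in a single basin. Take $O$ a thin neighborhood of $I\times\{-\rho\}$, which meets $B^s(0)$: your construction never meets $W^s_{u_1}(1)$. Around $I\times\{\rho\}$ it never meets $W^s_{u_1}(\infty)$.

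The paper therefore splits into cases.
\begin{itemize}
\item If the rays only reach the basin of $\infty$, it pushes them with Lemma~\ref{l:cones} over the fiber $u_0$. It gets a connected piece meeting both $W^s_{u_0}(0)$ (small $a$) and its complement, hence meeting $\mathcal{J}_{u_0}$. It then uses $f^N_{u_0}(W)\supset\mathcal{J}_{u_0}\ni 1$.
\item If the rays only reach the basin of $1$, it uses the period-two fiber $\tilde u$ ($x_0=1/(2(L+1))$, $x_1=1/2-x_0$), in which both $0$ and $1$ attract. The curve $\mathcal{D}_{\tilde u}$ must then cross $\mathcal{J}_{\tilde u}$, and Montel's theorem gives points of $W^s_{\tilde u}(\infty)$.
\end{itemize}
Your remark that ``the tube is two-dimensional in $z$'' does not replace this. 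Your argument is valid only for non-real $\alpha$, where $S_0=\mathbb{C}$.

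\textbf{The passage from $\infty$ back.} You have no actual mechanism for $B^s(\infty)\subset B^s(0)$, for any $\alpha$. Away from the fibers $u=1/4,3/4$ the point $\infty$ is superattracting. A set near $\mathbb{T}\times\{\infty\}$ can leave a neighborhood of $\infty$ only by passing near the degenerate points $(1/4,\infty)$ and $(3/4,\infty)$. ``Pulling back'' with B\"ottcher coordinates says nothing about forward images $F^n(O)$. The observation that large discs are mapped near $\infty$ goes the wrong way.

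The missing idea is the degeneracy at $u=1/4,3/4$. There $f_u=\mathrm{id}$, and the preimage $(1+\alpha\cos 2\pi u)/(\alpha\cos 2\pi u)$ of $0$ tends to $\infty$ (Remark~\ref{r:equivariant}). The paper uses B\"ottcher coordinates in the fiber $u=0$ to place, inside a high iterate $F^n(O)$, a cylinder $\mathcal{A}$ encircling $J\times\{\infty\}$ with $J\ni 0,1/4$. The curve of preimages of $0$ must cross $\mathcal{A}$, so $F^{n+1}(O)$ contains a neighborhood of a point of $\mathbb{T}\times\{0\}$.

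\textbf{Consequences.} Without this step, your Step~3 cannot handle open sets whose images meet the fiber $u_0$ only in $W^s_{u_0}(\infty)$. Your sketch of (2) rests again on the density argument, so it inherits both gaps. The paper instead uses that high iterates contain tubular neighborhoods of $\mathbb{T}\times\{0\}$, hence neighborhoods of $(u_1,0)\in\mathcal{J}_{u_1}$, together with Montel's theorem.
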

	
A large part of the argument to prove the theorem is contained in the following inclusion lemma.

\begin{lemma}
        $B^s(p)\subset B^s(q)$, for $p,q=0,1,\infty$.
\end{lemma}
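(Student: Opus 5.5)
Take $(u,z)\in B^s(p)$ and a neighborhood $O$ of it. We must show that $O\cap W^s(q)$ has positive measure. Since $F$ is locally a diffeomorphism off a null set and $W^s(q)$ is $F$-invariant, it suffices to show that $F^N(O)$ meets $W^s(q)$ in positive measure for some $N$. Positive measure then pulls back to $O$ because $F^N$ is nonsingular: it has finite-to-one branches and the critical set is measure zero.

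\emph{Step 1: reach a fixed fiber.} By Lemma~\ref{l:cones}, for any $\delta>0$ some $F^N(O)$ contains a curve $\gamma$ that is $\delta$-$C^1$ close to $\mathbb{T}\times\{p\}$. For $p=\infty$ the curve is only close over an interval $J$ avoiding $1/4,3/4$, and $J$ may be chosen to contain a fixed point $\hat u$ of $E$. Thicken $\gamma$ inside the open set $F^N(O)$ to a thin tube around it. This gives an open set $U\subset F^N(O)$ that contains a small box $I\times D$, where $I\ni \hat u$ and $D$ is a small disc near $p$ in the fiber. If $p\in\{0,1\}$, take $\hat u=u_1$ when $p=0$ and $\hat u=u_0$ when $p=1$, so that $p$ is \emph{repelling} for $f_{\hat u}$ by \eqref{eq:u0}.

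\emph{Step 2: spread across the fiber using the non-resonance.} Work in the linearizing coordinates of Lemma~\ref{l:lin} near $(\hat u,p)$. In these coordinates $F$ acts as $(u,w)\mapsto(E(u),\eta(\hat u)w)$. The box $I\times D$ contains $E^{-j}$-preimage strips of $I$ that are mapped onto all of a neighborhood of $\hat u$, and we may alternate between the two fixed fibers $u_0,u_1$ by choosing points whose $E$-itineraries visit neighborhoods of $u_0$ and $u_1$ prescribed numbers of times $k,\ell$. The fiber coordinate is then multiplied, approximately, by $Df_{u_0}(p)^k Df_{u_1}(p)^\ell$. Because $S_p=\mathbb{C}$ (for real $\alpha$, $S_p=[0,\infty)$ together with the real-line symmetry), these products are dense. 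Hence images of the small disc $D$ can be placed at any prescribed position in a fixed neighborhood of $p$ in the repelling fiber, with controlled size.

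Iterating this with the expansion at $p$ in the fiber $\hat u$, we obtain an open set of the form (small $u$-interval near $\hat u$) $\times$ (region in the fiber of $\hat u$ meeting the basin of $q$ for $f_{\hat u}$). For $q\neq\infty$ we use the fiber in which $q$ is attracting. For $q=\infty$ any fiber works, since the point $\infty$ always attracts. Concretely, the Julia set $\mathcal J_{\hat u}$ passes through $p$: it is a Jordan curve containing $0$ for $\hat u=u_1$. Every neighborhood of $p$ in that fiber therefore contains open pieces of the interior basin $W^s_{u_1}(1)$ and the exterior basin of $\infty$. If we need to reach the basin of $p$ itself, or to pass from $\infty$ to $0$ or $1$, we move the thin open set from Step~1 along the fixed fiber. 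That open set lies in the attracting region of $p$ for the other fixed fiber.

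\emph{Step 3: positive measure.} The resulting open set $V$ contains a product $I'\times W$ with $W\subset W^s_{\hat u}(q)$ open and compactly contained in the immediate basin. Shrink $I'$ and use continuity of $f_u$ in $u$ together with Lemma~\ref{l:Ws-loc}: the set of $u\in I'$ whose orbits stay in $\Lambda_q$-type good sets has nearly full measure after the finite transit time into $B_R(q)$. This gives positive measure of $V\cap W^s(q)$.

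\emph{Main obstacle.} The hardest step is Step~2. There we must control the $u$-dependence while the fiber is iterated many times near a repelling point, since orbits leave the neighborhood of $\hat u$. I would handle it with the $C^k$ strong-unstable holonomies of Lemma~\ref{l:lin} (large $L$), which make $u$-variation negligible. The case $p=\infty$, where $\infty$ is superattracting in every fiber, needs separate care. For it I would first use the fact that the Julia set of $f_{\hat u}$ is the boundary of the basin of $\infty$, so the open set can be pushed from near $\infty$ onto $\mathcal J_{\hat u}$, and then proceed as above.
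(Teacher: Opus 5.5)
\textbf{There are genuine gaps: Step~2 fails for real $\alpha$, and the case $p=\infty$ lacks a working mechanism.}

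\emph{Real $\alpha$.} This is the case the paper treats as the harder one. Here $\eta(u_0)$ and $\eta(u_1)$ are positive reals, so $\eta(u_0)^k\eta(u_1)^\ell$ is dense only in $[0,\infty)$, which is all Lemma~\ref{l:help} provides. In the linearizing chart at $(u_1,0)$ you can therefore only slide $D$ along the ray through its own position, and $f_{u_1}$ preserves these rays. That direction is dictated by $O$, not chosen by you. Complex conjugation is a symmetry of $F$, not something realized by iterating $O$, so it supplies no new directions.

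For example, take $\alpha\in(0,1)$, $u_1=0$, and $O$ a tiny neighborhood of $(u,x)$ with $u$ a density point of $\Lambda_0$ and $-R<x<0$. This is a point of $B^s(0)$ by Lemma~\ref{l:Ws-loc}. If you build $D$ around the real curve $\gamma$ coming from $I_n\times\{x\}$, then $D$ and all its rescalings stay in a thin sector about the negative real axis. Near $0$ that sector lies in the basin of $\infty$ of $f_0$, so $W^s_{u_1}(1)$ is never reached. Symmetrically, a disc near the positive real axis stays inside $W^s_{u_1}(1)$, which defeats your argument for $q=\infty$.

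The paper needs two further ideas here. First, if the radial curve $\mathcal{C}_{u_1}([a_1,a_2])$ misses $W^s_{u_1}(1)$, those points escape. Later iterates then meet both $W^s_{u_0}(0)$ and its complement, hence $\mathcal{J}_{u_0}$, and the blow-up property ($f^N_{u_0}(W)$ covers $\mathcal{J}_{u_0}\ni(u_0,1)$) gives a neighborhood of $(u_0,1)$. Second, for $q=\infty$ the paper passes to a period-two fiber $\tilde u$ in which both $0$ and $1$ attract. The radial curves then meet two Fatou components, hence $\mathcal{J}_{\tilde u}$, and Montel's theorem gives escaping points. As written, your Step~2 is valid only for non-real $\alpha$ under \eqref{e:argat0}, where it agrees with the paper's closing remark.

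\emph{The case $p=\infty$.} In $w=1/z$ one has $|g_u(w)|\le |w|/2$ for $|w|<r$ whenever $|\cos 2\pi u|\ge c$, with $r$ depending on $c$. So along any orbit whose base avoids neighborhoods of $1/4,3/4$, points near $\infty$ stay near $\infty$. In particular the dynamics of $f_{\hat u}$ cannot carry your box onto $\mathcal{J}_{\hat u}$. That would require the box to already contain points outside the basin of $\infty$ of $f_{\hat u}$, which is what is to be proved.

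Orbits can leave a neighborhood of $\mathbb{T}\times\{\infty\}$ only when the base passes near $1/4$ or $3/4$, exactly the fibers your Step~1 discards. There $f_u$ degenerates to the identity, and the preimage $(1+\alpha\cos 2\pi u)/(\alpha\cos 2\pi u)$ of $0$ tends to $\infty$ as $u\to 1/4,3/4$. The paper's proof of $B^s(\infty)\subset B^s(0)$ is built on this:
\begin{enumerate}
\item B\"ottcher coordinates at $(0,\infty)$ blow a small arc up to a full circle. Hence some iterate of $O$ contains a neighborhood of a cylinder $\mathcal{A}$ lying over an interval that contains $0$ and $1/4$.
\item The curve of preimages of $0$ through $(1/4,\infty)$ meets $\mathcal{A}$. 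So some point of an iterate of $O$ is mapped onto $\mathbb{T}\times\{0\}$.
\item A later iterate then contains a tubular neighborhood of $\mathbb{T}\times\{0\}$, and Lemma~\ref{l:Ws-loc} finishes.
\end{enumerate}
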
  

\begin{proof} We prove three cases, $B^s(0) \subset B^s(1)$, $B^s(0) \subset B^s(\infty)$ and $B^s(\infty) \subset B^s(0)$. The remaining three cases are entirely analogous. 
\begin{figure}[ht]
\includegraphics[width=15cm]{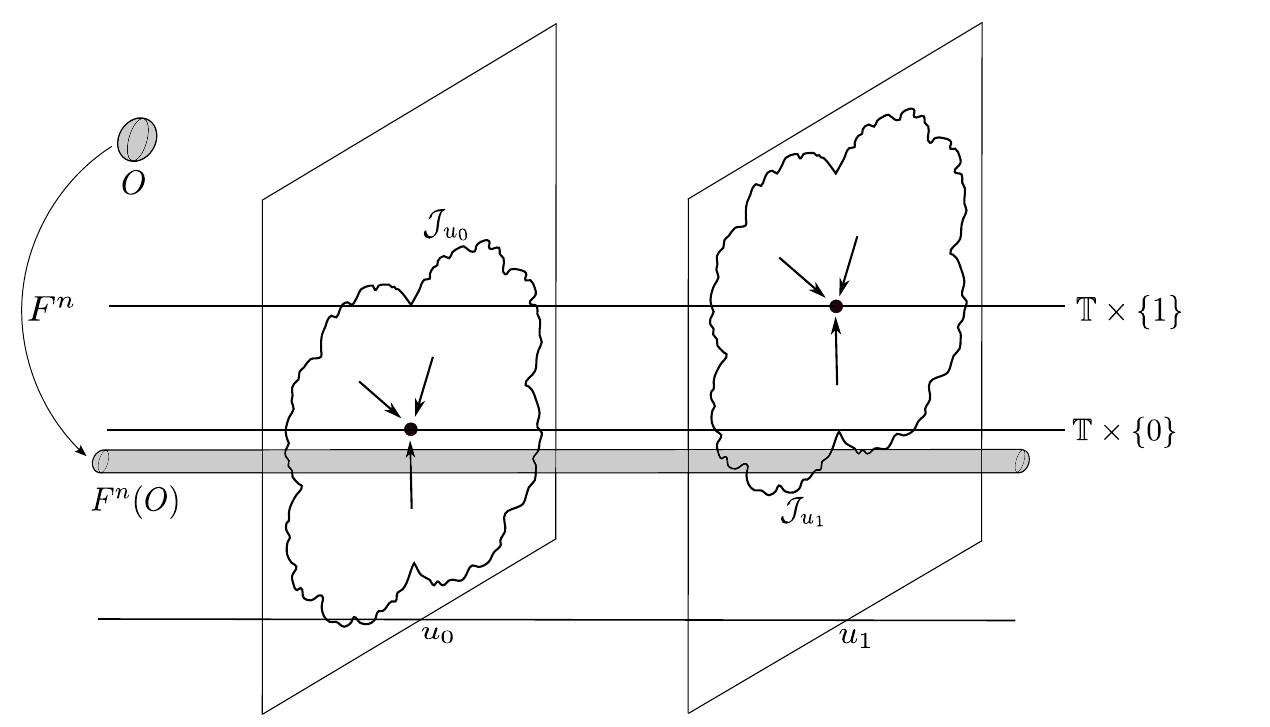}
		
		\caption{\label{f:proof-sketch} Iterates of an open set $O$ that intersect the basin  of attraction of $\mathbb{T} \times \{0\}$ contain parts that accumulate onto $\mathbb{T}\times \{0\}$. To prove that the basins of $\mathbb{T} \times \{0\}$ and $\mathbb{T} \times \{1\}$ are intermingled, we show that  further iterates contain parts that accumulate onto $\mathbb{T} \times  \{1\}$.   }
	\end{figure}	
    
	\begin{proof}[Proof of $B^s(0) \subset B^s(1)$]\renewcommand{\qedsymbol}{}
    Take a point in $B^s(0)$ and a neighborhood $O$ of it. Then $O$ intersects $W^s(0)$ in a set of positive measure. 	
	We must show that $O$ also  intersects $W^s(1)$ in a set of positive measure.
	We do this by showing that parts of $F^n(O)$ accumulate on $\mathbb{T} \times \{1\}$.
	The argument will be finished by invoking Lemma~\ref{l:Ws-loc}.
	
	Since $O$ intersects $B^s(0)$, using Lemma~\ref{l:cones},  for $n$ large
	$F^n(O)$ contains a neighborhood of a curve $C^1$-close to  $\mathbb{T} \times \{0\}$.
	In particular $F^n(O)$ intersects $W^s_{u_0} (0)$.
	One case is where $F^n(O)$ also intersects $W^s_{u_1} (1)$.
	Then further iterates $F^{n+j}(O)$ have a part that accumulates onto $\mathbb{T} \times \{1\}$ and we are done by Lemma~\ref{l:Ws-loc}.
	
	Suppose this is not the case.  Figure~\ref{f:proof-sketch} provides an illustration.
	The set $F^n(O)$ contains a small segment $[u_0 , u_0 + \varepsilon] \times \{z\}$ for some $z$ close to $0$.
	For this given $z$, Lemma~\ref{l:help} below gives the existence of a curve $a \mapsto \mathcal{C}_{u_1} (a) \in \{u_1\} \times \hat{\mathbb{C}}$ for $a \geq 0$ with $\mathcal{C}_{u_1} (0) = (u_1,0)$ and
	the following property. 
	For a compact interval $[a_1,a_2] \subset (0,\infty)$, there is $N>0$ so that $\cup_{i\le N} F^i (O)$ contains $\mathcal{C}_{u_1} ([a_1,a_2])$ in the fiber $\{u_1\} \times \hat{\mathbb{C}}$.
	Points of $\cup_{i\le N} F^i (O)$
	in $U_{u_1}$ are outside of $\overline{W^s_{u_1} (1)}$ and thus in the basin of $\infty$. Using Lemma~\ref{l:cones}, there is an iterate $F^j ( \cup_{i\le N} F^i (O))$ that  contains points outside $W^s_{u_0} (0)$. Taking $a_1$ small enough, it also contains points in $W^s_{u_0} (0)$.
	A high iterate $F^n(O)$ therefore intersects $\mathcal{J}_{u_0}$.
	
	For any open subset $W$ of $\{u_0\} \times \hat{\mathbb{C}}$ that intersects  $\mathcal{J}_{u_0}$ there is a positive integer $N$
	so that $f_{u_0}^N (W)$ covers  $\mathcal{J}_{u_0}$,
	see \cite[Corollary~11.2]{MR2193309}.
	There is $\delta>0$ so that for
	$|u-u_{0}| < \delta$, $f^N_u (W)$ also covers
	$\mathcal{J}_{u_0}$. Note that $(u_0,1) \in \mathcal{J}_{u_0}$.
	
	Combining this with the above derived statement that $ \cup_{i\le N} F^i (O)$ for $N$ high enough intersects $\mathcal{J}_{u_0}$,
	we find that $\mathbb{T} \times \{1\}$ is contained in sufficiently high iterates $F^n(O)$.
	This set $F^n(O)$ therefore intersects the basin of  $\mathbb{T} \times \{1\}$
	in a set of positive measure, by Lemma~\ref{l:Ws-loc}. So $O$  intersects the basin of  $\mathbb{T} \times \{1\}$
	in a set of positive measure, as we wanted to show.
\end{proof}	

\begin{proof}[Proof of $B^s(0) \subset B^s(\infty)$]\renewcommand{\qedsymbol}{} 		
	Consider again an open set $O$ that intersects $B^s(0)$.
	If  $F^n(O)$ also intersects $W^s_{u_1} (\infty)$, as in Figure~\ref{f:proof-sketch}, then
	further iterates $F^{n+j}(O)$ have a part that accumulates onto $\mathbb{T} \times \{\infty\}$ and we are done by Lemma~\ref{l:Ws-loc}.
	
	Now suppose that $F^n(O)$ intersects only $W^s_{u_1} (1)$ and thus does not intersect $W^s_{u_1} (\infty)$. 
	Take a periodic fiber $\tilde{u}$ in which both $0$ and $1$ are attracting.
	For $L$ odd and $\alpha$ real one can take a period-two periodic orbit $\{x_0, x_1\}$ with $x_0 =  1 / (2 (L+1))$ and $x_1= 1/2 - x_0$. Indeed, then 
	\[
	\ln|Df_{x_0} (0)| + \ln|Df_{x_1} (0)| = 
	\ln |1 + \alpha \cos(2 \pi x_0)|  + \ln|1 + \alpha\cos(2 \pi x_1)| < 0
	\] 
	and 
	\[
	\ln|Df_{x_0} (1)| + \ln|Df_{x_1} (1)| = 
	\ln |1 - \alpha \cos(2 \pi x_0)|  + \ln|1 - \alpha \cos(2 \pi x_1)| < 0.
	\]
	%
	We invoke again Lemma~\ref{l:help} below to get the following.
	A union of iterates $\cup_{i \leq N} F^i(O)$ for $N$ large enough contains  neighborhoods of $\mathcal{C}_u ([a_1,a_2])$ for  $u \in [u_0,u_1]$.
	Given $m >0$, let $[i_0,u_1]$ be an interval that maps onto $[u_0,u_1]$ injectively by $E^m$. Then $F^m ( \cup_{i \leq N} F^i(O) )$ contains  $F^m \left( \cup_{u \in [i_0,u_1]} \mathcal{C}_u ([a_1,a_2]) \right)$. 
	As $F^n(O)$ intersects $W^s_{u_1} (1)$, for suitable $a_1,a_2$ and $m$ this last set consists of curves $\mathcal{D}_u \subset \{u\} \times \hat{\mathbb{C}}$, $u \in [u_0,u_1]$, that intersect both a small neighborhood of $[u_0,u_1] \times \{0\}$ and of $[u_0,u_1] \times \{1\}$.
	The curve $\mathcal{D}_{\tilde{u}}$ therefore intersects both 
	$W^s_{\tilde{u}} (0)$ and   $W^s_{\tilde{u}} (1)$, and hence also $\mathcal{J}_{\tilde{u}}$,  
	 say in a point   $(\tilde{u} , \tilde{z})$. 
	 By Montel's theorem,  see \cite[Theorem~4.1]{MR503901}, any neighborhood of  $(\tilde{u} , \tilde{z})$ contains points in $W^s_{\tilde{u}} (\infty)$.
\end{proof}	

\noindent 	{\em Proof of $B^s(\infty) \subset B^s(0)$.}
Take an open set $O$ that intersects $B^s(\infty)$.
Recall that in a coordinate $w  = 1/z$ near $\infty$, $z_{n+1} = f_u(z_n)$ becomes
\begin{align}\label{e:z^2}
	w_{n+1}
	&=  \frac{w_n^2}{ (1 + \alpha \cos (2 \pi u)) w_n - \alpha \cos (2 \pi u)   }.
\end{align}
In the fixed fiber $u = 0$ this gives
	 \[
	 w_{n+1} = - w_n^2 / \alpha + \mathcal{O} (w_n^3).
	 \]
	 By B\"otthcher's theorem there is  a holomorphic coordinate change that conjugates this locally near $0$ to the map $q(v) \coloneqq -v^2/\alpha$. 
	 Consider a small arc $A$ close to $0$ with constant radius $\delta$. A high iterate $q^k$ maps $A$ to the entire circle $C$ with radius $1/|\alpha|^k$.	 
	 Let $J$ be a an interval in $\mathbb{T}$ containing $0$ and $1/4$. Let $I$ be the small interval in $\mathbb{T}$ containing $0$ that is mapped injectively onto $J$ by $E^k$. By Lemma~\ref{l:lin},  $\mathcal{A} \coloneqq F^k (I \times A)$ is a diffeomorphic image of the cylinder $J \times C$. 
	 By an application of Lemma~\ref{l:cones},  
	 we conclude that a high iterate $F^n(O)$ contains a small neighborhood of $\mathcal{A}$. 
	 
	  Equation~\ref{e:z^2} makes clear that
	  $
	  f_u (    ( 1 + \alpha \cos (2 \pi u))  / \alpha \cos(2 \pi u) ) = 0.
	  $
	  Near $u = 1/4$ (or $u=3/4$) the poles of $v \mapsto  1 / f_u(1/v)$ form a curve
	  in the complex plane through $0$ parameterized by $u \mapsto \alpha \cos(2 \pi u) / ( 1 + \alpha \cos (2 \pi u))$.
	  This curve intersects $\mathcal{A}$ and hence $F^n(O)$, so that there is a point $(u,z) \in F^n(O)$  with
	  $F(u,z) = (u,0)$.  There is therefore a high iterate $F^{n+m} (O)$ that contains a tubular neighborhood of $\mathbb{T} \times \{0\}$.
	 As before this implies the statement we wish to prove. 
\end{proof}

\begin{proof}[Proof of Theorem \ref{t:main}]We discuss the more involved case of real $\alpha$. At the end we discuss the changes for non-real $\alpha$. The statement that $\mathbb{T}\times\{0\}$, $\mathbb{T}\times\{1\}$ and $\mathbb{T}\times\{\infty\}$ are metric attractors follows from Lemma \ref{l:Ws-loc}.

\begin{proof}[Proof of (1)]\renewcommand{\qedsymbol}{} 	
 Iterating an open set $O\subset \mathbb{T} \times \hat{\mathbb{C}}$ by $F$ yields an open set $F^n(O)$ that projects surjectively to the first coordinate in $\mathbb{T}$. 
 As the complement $\mathcal{F}_{u_1}$ of $\mathcal{J}_{u_1}$ is open and dense in
 $\{u_0\} \times \hat{\mathbb{C}}$,  $F^n(O)$ intersects either $W^s_{u_0} (0)$ or $W^s_{u_0} (\infty)$. 
 As above, since further iterates accumulate onto $\mathbb{T} \times \{0\}$ or $\mathbb{T}\times \{\infty\}$ (see the arguments of Lemma~\ref{l:cones}),  
it follows that $O$ intersects $B^s(0)$ or $B^s (\infty)$. The statement follows from the earlier derived property that $B^s(0)$, $B^s(1)$ and $B^s(\infty)$ are all equal.  
\end{proof}

\begin{proof}[Proof of (2)]\renewcommand{\qedsymbol}{} 	
	Take open sets $U,V \subset \mathbb{T} \times \hat{\mathbb{C}}$. As $B^s(p) = \mathbb{T} \times \hat{\mathbb{C}}$ for $p=0,1,\infty$,  we find as in the reasoning  of ``$B^s(\infty) \subset B^s(0)$'', that some high iterate $F^n(U)$ contains tubular neighborhoods of $\mathbb{T} \times \{0\}$ and of $\mathbb{T} \times \{1\}$. By Montel's theorem $\cup_{n \ge 0} F^n(U)$ covers $\{u_0\} \times \mathbb{C}$. Topological mixing follows. 
\end{proof}


For non-real $\alpha$ one can follow the set-up for real $\alpha$, but some arguments become more direct and hence shorter.
	For non-real $\alpha$ we have that for any  $a \in \mathbb{C}$ we can find sequences $k,\ell$ going to infinity so that
	$\eta(u_0)^k \eta(u_1)^\ell$ converges to $a$.
	For instance in the ``proof of $B^s(0)\subset B^s(1)$'', with $O$ open and intersecting $B^s(0)$,
	this directly implies that we can find $N>0$ so that $\cup_{i\le N} F^i (O)$ has nonempty intersection with $W^s_{u_1} (1)$. So again,  further iterates $F^{n+j}(O)$ have a part that accumulates onto $\mathbb{T} \times \{1\}$ and we are done. Similar simplifications apply to the other proof parts as well.
\end{proof}

The above proof makes use of
the following lemma that adapts
results in  \cite[Lemma~3]{MR2644340} (see also  \cite[Proposition~2.1]{MR3600645}) for iterated function systems  to our skew product context.
Focus of the lemma is orbit pieces for $E$ that spend a large number, roughly $k$, of iterates near $u_0$ and then a large number, roughly $\ell$, of iterates near $u_1$.
In the lemma an asymptotic formula for the resulting orbit in the fibers is obtained.
Write $J \subset \mathbb{T}$  for one of the two intervals that form the connected components of $\mathbb{T} \setminus \{u_0,u_1\}$, and write $J = (u_0,u_1)$.

\begin{lemma}\label{l:help}
Assume the conditions of Theorem~\ref{t:main} with $\alpha$ a real number.  Fix $\hat{t} \in \overline{J}$ and  $a \in [0,\infty)$. Then there are arbitrary large integers $k, \ell$ and for each such pair of integers an orbit $t_{j} = E^j(t_0)$, $0 \leq j \leq k+ \ell$ with
$u_0 < t_0 < \cdots < t_{k} \leq u_1$ and $\hat{t} = t_{k+\ell} \leq \cdots \leq t_{k}$,
so that
\begin{align*} 
F^{k+\ell} (t_0,z) \to \mathcal{C}_{\hat{t}} (a) := \big(\hat{t} , C a z + R(z,\bar{z})\big)
\end{align*}
as $k,\ell \to \infty$.
Here $C$ is an explicit constant and $R(z,\bar{z}) = \mathcal{O} (|z|^2)$ uniformly in $a$ for $a$ from a bounded set.

For non-real $\alpha$ under the conditions of Theorem~\ref{t:main},  the same statement holds for any $a \in \mathbb{C}$.
\end{lemma}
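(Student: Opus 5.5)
The plan is to use the linearizing coordinates of Lemma~\ref{l:lin} near $(u_0,0)$ and $(u_1,0)$ and to follow an orbit of $E$ that shadows $u_0$ for about $k$ steps, travels along $J$, and then shadows $u_1$ for about $\ell$ steps. The fiber map along this orbit is then, up to bounded transition maps, multiplication by $\eta(u_0)^k\eta(u_1)^\ell$. Condition~\eqref{eq:rat-indep-at0}, together with \eqref{eq:u0}, makes $\{\eta(u_0)^k\eta(u_1)^\ell\}$ dense in $S_0=[0,\infty)$ for real $\alpha$. Under \eqref{e:argat0} it is dense in $\mathbb{C}$ for non-real $\alpha$. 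So along suitable sequences $k,\ell\to\infty$ the product converges to any prescribed $a$, which is what produces the curve $\mathcal{C}_{\hat t}(a)$.

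\emph{Choice of orbit.} Since $E(u)=Lu$ is linear, orbits are determined by base-$L$ expansions. Fix $\hat t\in\overline J$. I would pick $t_0$ so that $E^j(t_0)$ stays in a small neighborhood $U_0$ of $u_0$ for $0\le j\le k-m_0$. It should then cross from $U_0$ to a small neighborhood $U_1$ of $u_1$ in a fixed number $m_0$ of steps along a fixed transition orbit segment, monotone in $J$. Finally it should approach $u_1$ and remain in $U_1$ for about $\ell$ steps, landing exactly at $\hat t$. Concretely, I would take $t_{k+\ell}=\hat t$ and pull back by the inverse branch of $E$ fixing $u_1$, which contracts toward $u_1$ by the factor $1/L$. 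After $\ell$ pullbacks we are $L^{-\ell}$-close to $u_1$. The transition inverse branch then moves us to near $u_0$, and $k$ further pullbacks by the branch fixing $u_0$ give $t_0$. Monotonicity $u_0<t_0<\dots<t_k\le u_1$ holds because each branch fixing an endpoint preserves the side of $J$. This construction also gives exponential closeness $|t_j-u_0|\lesssim L^{j-k}$ and $|t_{k+j}-u_1|\lesssim L^{-(\ell-j)}$.

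\emph{Fiber computation.} In the coordinates $H$ of Lemma~\ref{l:lin} near $(u_0,0)$, the first $k$ steps act as $z\mapsto\eta(u_0)^k H_{t_0}(z,\bar z)$. The fixed coordinates are valid on a fixed neighborhood of $u_0$, so no error accumulates and the only loss is that $H_{t_0}$ is $C^k$-close to $H_{u_0}$. Since $|\eta(u_0)|<1$, the point lands very close to $0$. The $m_0$ transition steps are a fixed map, namely a composition of $f_u$ with $u$ converging to fixed values, fixing $0$ with derivative some $\kappa\neq0$. On the tiny scale, it is linear up to a quadratic error. The last $\ell$ steps in the coordinates near $(u_1,0)$ multiply by $\eta(u_1)^\ell$, which is large since $|\eta(u_1)|>1$; here the base orbit runs along the strong unstable foliation toward $\hat t$. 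Combining, $F^{k+\ell}(t_0,z)$ has fiber coordinate
\[
H^{-1}_{\hat t}\bigl(\eta(u_1)^{\ell}\,\kappa'\,\eta(u_0)^{k}H_{t_0}(z,\bar z)+\text{error}\bigr),
\]
with $\kappa'$ containing the transition derivative and the derivatives of the coordinate changes. Choosing $(k,\ell)$ with $\eta(u_0)^k\eta(u_1)^\ell\to a$ gives the limit $C a z+R(z,\bar z)$, where $C$ is $\kappa'$ times the linear part of $H^{-1}_{\hat t}$. The remainder $R=\mathcal{O}(|z|^2)$ comes from the nonlinear terms of $H_{t_0}$ and $H_{\hat t}^{-1}$. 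It is uniform for $a$ in bounded sets because the scale $|\eta(u_0)^k\eta(u_1)^\ell z|$ stays bounded.

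\emph{Main obstacle.} The hard step is controlling the error in the middle: the point sits at the exponentially small scale $|\eta(u_0)|^k$ when it is then amplified by $|\eta(u_1)|^\ell$. Quadratic errors at the small scale are harmless, of relative size $|\eta(u_0)|^k$. The real issue is that $\hat t$ lies in $\overline J$ rather than in a neighborhood of $u_1$, so the final linearizing chart must reach it. I would handle this by letting $\hat t$ sit in a fixed compact piece and using the $C^k$ strong unstable holonomies from Lemma~\ref{l:lin} along the unstable direction of $E$, which is all of $\mathbb{T}$, to carry the chart at $u_1$ out to $\hat t$ with bounded distortion. Alternatively I would absorb the final finitely many steps into a fixed diffeomorphism, which changes only $C$ and $R$.
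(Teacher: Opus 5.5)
Your proposal is correct and follows essentially the paper's route. You linearize at $(u_0,0)$ and $(u_1,0)$ via Lemma~\ref{l:lin}, follow an orbit that moves $k$ steps away from $u_0$ along the branch of $E$ fixing $u_0$ and then $\ell$ steps away from $u_1$ down to $\hat t$, reduce the fiber map to $\eta(u_0)^k\eta(u_1)^\ell$ up to convergent chart changes, and pick $(k,\ell)$ using \eqref{eq:rat-indep-at0}. The paper handles your ``transition'' and ``reaching $\hat t$'' issues exactly as in your second option: it extends both linearizing charts over a neighborhood of $[u_0,u_1]\times\{0\}$ via the conjugacy equation $H_{E(u)}=\eta(u_0)H_u\circ f_u^{-1}$ and joins them by a single chart change $J_u$, so the holonomy alternative is not needed.
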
	

\begin{proof}
	We consider only real $\alpha$, the argument for complex $\alpha$ is essentially the same. So we assume eigenvalue conditions \eqref{eq:u0} and \eqref{eq:rat-indep-at0}.
	There is $i_1 \in J$ so that
	$E$ maps $[u_0,i_1]$ injectively onto $[u_0,u_1]$. Likewise there is $i_0 \in J$ so that $E$ maps $[i_0,u_1]$ injectively onto $[u_0,u_1]$.
	
	By Lemma~\ref{l:lin} we may assume the skew product is linearized near $(u_0,0)$. By iterating and using the conjugacy equation
	\[
	H_{E(u)} = \eta(u_0) H_{u} \circ f_u^{-1}
	\]
	we may assume the skew product is linearized on a neighborhood $U$ of $[u_0, u_1] \times \{0\}$. That is,  on $U$ we may use coordinates in which for points $(u,z) \in U$ so that also $F(u,z) \in U$,
	\[
	F (u,z) = (E (u) , \eta(u_0) z ).
	\]
	This works for $u \in [u_0,i_1]$ and $z$ near $0$. Note that $E(u) = u_0 + L (u-u_0)$.
	For a high natural number $k$ there is a small interval $[u_0,j_1]$ so that
	$E^k$ maps  $[u_0,j_1]$ injectively onto $[u_0, u_1]$. For $u \in [u_0,j_1]$ and $z$ near $0$,
	\[
	F^k (u,z) = (E^k (u) , \eta(u_0)^k z).
	\]
	
	In the same way there is a local linearizing coordinate  near $(u_1,0)$.
	By iterating  and using the conjugacy equation, such a coordinate exists on a small neighborhood of $[u_0,  u_1] \times \{0\}$, which we may also denote by $U$.
	That is,  on $U$ we may use coordinates in which for points $(u,z) \in U$ so that also $F(u,z) \in U$,
	\[
	F (u,z) = (E (u) , \eta(u_1) z ).
	\]
	This works for $u \in [i_0,u_1]$ and $z$ near $0$.
	For a high natural number $\ell$ there is a small interval $[j_0,u_1]$ so that
	$E^k$ maps  $[j_0,u_1]$ injectively onto $[u_0, u_1]$. For $u \in [j_0,u_1]$ and $z$ near $0$,
	\[
	F^\ell (u,z) = (E^\ell (u) , \eta(u_1)^\ell z).
	\]
	A coordinate change $(u,z) \mapsto (u,J_u(z))$ maps between the two coordinate systems.
	
	
	Consider now $t_0 \in [u_0,j_1]$ and an orbit piece
	$t_j = E^j (t_0)$, $0 \le j \le k + \ell$,  that lies inside $\overline{J}$ with $t_0 < \ldots < t_{k-1}  \in (u_0, i_1]$ and $t_k \geq \ldots \geq t_{k+\ell-1} \in [i_0,u_1]$.
	So we are considering a single orbit piece  that moves $k$ steps from $u_0$ in the direction of $u_1$ and then $\ell$ steps back.
	Note that $t_0 \to u_0$ as $k \to \infty$ and $t_k \to u_1$ as $\ell \to \infty$. We allow for the possibility $t_k = u_1$ and then $t_j = u_1$ for $k\le j \le k+\ell$.
	For orbit pieces that stay in $U$
	this gives
	\begin{align}\label{e:Fk+l}
		F^{k+\ell} (t_0,z) &=
		\left( t_{k+\ell} , J_{t_{k+\ell}}^{-1}  \big( \eta(u_1)^\ell J_{t_k} (\eta(u_0)^k z, c.c. ) \big) \right),
	\end{align}
	where $c.c.$ stands for complex conjugate.
	
	Consider such orbit pieces that stay in $U$ and end at a fixed final point $t_{k+\ell} = \hat{t} \in \overline{J}$.
	By Hypothesis~\eqref{eq:rat-indep-at0}, for any positive $a$ we can find sequences $k,\ell$ going to infinity so that
	$\eta(u_0)^k \eta(u_1)^\ell$ converges to $a$.
	If both $k,\ell \to \infty$ and  $\eta(u_0)^k \eta(u_1)^\ell$ converges to $a$,
	\[
	\eta(u_1)^\ell J_{t_k} \big(\eta(u_0)^k z, c.c.\big) \to D_zJ_{u_1} (0,0) a z
	\]
	as $t_k \to u_1$ and the higher order terms converge to zero.
	So if  $k,\ell \to \infty$ and  $\eta(u_0)^k \eta(u_1)^\ell$ converges to $a$, with $t_{k+\ell} = \hat{t}$,
	the expression \eqref{e:Fk+l} converges to
	\[
	\mathcal{C}_{\hat{t}} (a) \coloneqq 	\left( \hat{t} ,  J_{\hat{t}}^{-1}  \left( a  D_zJ_{u_1} (0,0) z \right)  \right).
	\]
	This expression makes clear that $a \mapsto \mathcal{C}_{\hat{t}} (a)$ defines a curve inside $\{\hat{t}\} \times U$, of the form
	\[
	\mathcal{C}_{\hat{t}} = \big( \hat{t} , C a z + R(z,\bar{z}) \big),
	\]
	where $C = D_zJ_{u_1} (0,0) / D_zJ_{\hat{t}} (0,0)$ and  $R(z,\bar{z}) = \mathcal{O} (|z|^2)$.
\end{proof}			
				
\section{Open problems and conjectures}\label{s:lls}

Unfortunately, we are not able to show that the union of 
$\T \times \{0\}$, $\T \times \{1\}$ and $\T \times \{\infty\}$ constitutes the likely limit set of $F$. Recall that 
a set $X$ is called the {\em likely limit set} of $F$ if $X$ is the smallest closed set that contains the $\omega$-limit sets of almost every point.

Let us explain the difficulties of proving this, by discussing a simpler case where the corresponding statement was proved. 
In \cite[Proposition 11.1]{MR2105774} it is shown that for $\alpha\in (0,1)$ real, the union of $\{0\}\times \T$ and $\{1\}\times \T$
is the likely limit set of $F\colon \T\times [0,1]\to \T \times [0,1]$, provided $L$ is sufficiently large. 

Let us sketch the proof in \cite{MR2105774}.  Let $X\subset \T \times (0,1)$ be a  forward invariant set of positive Lebesgue measure.  By Fubini’s theorem, there is exists $z \in (0,1)$ so that $X\cap (\T\times  \{z\})$ has positive measure in $\T \times (0,1)$. Take a Lebesgue density point $u\in \T$ of  $X\cap (\T\times  \{z\})$, and take a sequence of intervals $J_n \times \{z\} \subset \T\times \{z\}$ with $J_n\ni u$,  which are mapped by $F^n$ onto a curve that maps in a bijective way to $\T$ by the projection
on the first coordinate. For each $\epsilon>0$ there exists $n\ge 0$ so that the projection on the first coordinate of 
$F^n(X\cap (J_n\times \{z\}))$ contains a set of size at least $(1-\epsilon)$ in $\T$. By assumption, $F^n(J_n\times \{z\})$ intersects $\{u_0\}\times W^s(0)=\{u_0\}\times [0,1)$ 
and $\{u_1\}\times W^s(1)=\{u_1\}\times (0,1]$.  Moreover, and this sentence is crucial in the argument, 
provided we take $L$ sufficiently large, 
$F^n(J_n\times \{z\})$ is the graph of a function $\T\to \T\times (0,1)$ which 
has Lipschitz constant $\le 1$ (for any $n$), see 
Lemma~\ref{l:cones}.
By Lemma~\ref{l:Ws-loc}
 there exist $R>0$ and a set $\Lambda\subset \T$ of Lebesgue at least $1/2$ so that 
for each $u\in \Lambda$, $\{u\}\times B_R(p)\subset W^s_u(p)$ for $p\in \{0,1\}$. Combining these ingredients, it follows that for some $n'\ge n$, 
$F^n(X\cap (J_n\times \{z\}))$ must intersect either $W^s_u(0)$ or $W^s_u(1)$. Thus $X$ intersects $W^s_u(0)$ or $W^s_u(1)$
in a set of positive Lebesgue measure, and so the result follows. 

One can still find $z$ and $J_n$ as before in the complex case. Nevertheless, the previous argument does not go through even if we consider $F\colon \T\times \R \to \T \times \R$. The reason for this is that 
$F\colon \T\times \hat \C \to \T \times \hat  \C$ is discontinuous at $(u,\infty)=(1/4,\infty)$. This holds because $f_u(z)=z$ for 
$u=1/4$ and $f_{1/4+\epsilon}(z)\approx \alpha \epsilon z^2$ for $z$ large and $\epsilon\approx 0$.  
Therefore, one cannot conclude that the curve
$F^n(J_n\times \{z\})$ remains more or less horizontal, and it is entirely possible that the curve $F^n(J_n\times \{z\})$, 
considered as the graph of a function $\T\mapsto \T\times \bar \C$,  oscillates increasingly rapidly between $\T\times \{0\}$, $\T\times \{1\}$ and 
$\T\times \{\infty\}$ as $n\to \infty$.


Another approach to proving that the union of $\T\times \{0\}$, $\T\times \{1\}$ and 
$\T\times \{\infty\}$ is the likely limit set,  would be to use tools from complex dynamics. Indeed, as was done below 
Lemma~\ref{l:Ws-loc},
for each $u$ one can consider $z\mapsto f^n_u(z)$ and define the random Fatou and Julia sets $\mathcal F_u$ and  $ \mathcal J_u$. As mentioned, for certain choices of $u$,  which are periodic points of $u\mapsto E(u)$ of period $n$, 
one has that $f^n_u$ has periodic attractors other than $0,1,\infty$. So for such parameters $u$ there exist components $\mathcal F_u$ which are disjoint from the basins of $0,1,\infty$. However, it seems reasonable to 

\begin{conjecture}\label{c:1} For Lebesgue almost all $u$, 
	$\mathcal F_u$ is equal to the combined basin of $0,1,\infty$.
\end{conjecture} 
\begin{conjecture}\label{c:2} 
	For Lebesgue almost all $u$,   $\mathcal J_u$ has zero Lebesgue measure.
\end{conjecture} 
One approach to tackle Conjecture~\ref{c:2} is to consider the postcritical set $\mathcal P_u^n$ of $f^n_u$ and to take a Lebesgue density point $z$
of $\mathcal J_u$ and try to show that   for some sequence $n_i\to \infty$ the distance of $f^{n_i}_u(z)$ to $\mathcal P_u^{n_i}$ is bounded from below. If this holds, then applying Koebe and that $z$ is a Lebesgue density point, $\mathcal J_u$ would contain an open set, which would be a contradiction. 
However, it is unlikely that this approach would work for all $u$.  So probably one would need to use some parameter elimination techniques.  We do not know how to complete such a line of argument  in the current setting, although this might be easier 
to prove than the assertion that for almost every $a\in [0,4]$ the Julia set of $f_a(x)=ax(1-x)$ has zero Lebesgue measure, which follows from 
the work of  \cite{MR2018784}.

It also seems reasonable to state the following 
\begin{conjecture}\label{c:3} 
	There exists a set of positive Lebesgue measure $\Lambda\subset \T$ so that $\mathcal J_u$ is disconnected  for all $u\in \Lambda$. 
\end{conjecture}   This should hold if for almost every $u$ some critical point of $f^n_u$ escapes to infinity. There certainly exist examples for which $u$ is rational and $\mathcal J_u$ is disconnected.  For $u=1/13$,  $E^{3}(u)=u$ (when $L=3$) and the Julia set of  $f^{3}_u$ is disconnected: one of the critical points escapes. This statement does not seem to follow from previous results, where one considers
compositions of maps $f_{c_n}(z)=z^2+c_n$ where $c_n$ is chosen randomly from a large disc, see for example \cite{MR1145616,MR1721617, MR1760694,MR2218765,MR4403145}.


\appendix
	
\section{Calculation of the Lyapunov exponent}\label{s:integral}

\begin{lemma}\label{l:integral}
For $\alpha \in \mathbb{C}$ with $|\alpha|<1$,
\[
\int_0^1 \ln |1+\alpha \cos(2\pi u)| \, du  = \ln \left|  \frac{1 + \sqrt{1 - \alpha^2}}{2} \right|.
\]
\end{lemma}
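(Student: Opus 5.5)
The plan is to use Jensen's formula, or equivalently the mean value property of harmonic functions. Write $\cos(2\pi u) = (\zeta + \zeta^{-1})/2$ with $\zeta = e^{2\pi i u}$ on the unit circle. Then
\[
1 + \alpha \cos(2\pi u) = \frac{\alpha}{2}\,\zeta^{-1}\left(\zeta^2 + \frac{2}{\alpha}\zeta + 1\right) = \frac{\alpha}{2}\,\zeta^{-1}(\zeta - r_1)(\zeta - r_2),
\]
where $r_1, r_2$ are the roots of $\zeta^2 + (2/\alpha)\zeta + 1$ (for $\alpha \neq 0$; the case $\alpha = 0$ is trivial). Since $r_1 r_2 = 1$, exactly one root, say $r_1$, lies inside the unit disc, unless both lie on the circle. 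The latter is excluded: a root $r$ on the circle would give $1 + \alpha \cos\theta = 0$ for a real $\theta$, forcing $|\alpha| \geq 1$. Explicitly, $r_{1,2} = (-1 \pm \sqrt{1-\alpha^2})/\alpha$.

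The key steps, in order, are as follows. First, because $|\zeta| = 1$,
\[
\ln|1+\alpha\cos(2\pi u)| = \ln\tfrac{|\alpha|}{2} + \ln|\zeta - r_1| + \ln|\zeta - r_2|.
\]
Second, recall that $\int_0^1 \ln|e^{2\pi i u} - r|\,du$ equals $0$ if $|r|<1$ and equals $\ln|r|$ if $|r|>1$. This is the mean value property applied to the harmonic function $\ln|1 - r\bar\zeta|$, respectively $\ln|r| + \ln|1 - \zeta/r|$. Third, the integral therefore equals $\ln(|\alpha|/2) + \ln|r_2|$, where $r_2$ is the root outside the disc. Choose the branch of $\sqrt{1-\alpha^2}$ with positive real part, which is possible because $1-\alpha^2 \notin (-\infty,0]$ when $|\alpha|<1$. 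Then $|1+\sqrt{1-\alpha^2}| > |1-\sqrt{1-\alpha^2}|$, so $r_2 = -(1+\sqrt{1-\alpha^2})/\alpha$. This gives $\ln|(1+\sqrt{1-\alpha^2})/2|$, as claimed.

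The only delicate point is identifying which root lies outside the disc, that is, fixing the branch of the square root consistently with the stated formula. The observation $\operatorname{Re}\sqrt{1-\alpha^2} > 0$ settles this. An alternative route that avoids roots altogether: the function $\alpha \mapsto \int_0^1 \ln|1+\alpha\cos(2\pi u)|\,du$ is harmonic in $|\alpha|<1$, being the real part of the holomorphic function $\int_0^1 \log(1+\alpha\cos(2\pi u))\,du$. One checks it against the real-$\alpha$ case and concludes by analytic continuation of $\log\frac{1+\sqrt{1-\alpha^2}}{2}$. I would present the Jensen computation as the main argument.
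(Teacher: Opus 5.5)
Your proof is correct, and it takes a genuinely different route from the paper's.

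The paper works with the holomorphic function $g(z)=\frac{1}{2\pi}\int_0^{2\pi}\ln(1+z\cos x)\,dx$ on $|z|<1$. It differentiates under the integral sign and rewrites $g'(z)$ via the standard integral $\frac{1}{2\pi}\int_0^{2\pi}\frac{dx}{1+z\cos x}=\frac{1}{\sqrt{1-z^2}}$ (principal branch). It then integrates back to $\ln\frac{1+\sqrt{1-z^2}}{2}$ using $g(0)=0$ and takes real parts.

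You instead factor $1+\alpha\cos(2\pi u)$ into linear factors in $\zeta=e^{2\pi i u}$ and apply Jensen's formula to $\ln|\cdot|$ directly. This avoids choosing a branch of $\log(1+z\cos x)$, justifying differentiation under the integral, and finding a primitive. It also exposes the one delicate point that the paper leaves inside the quoted contour integral. Evaluating that integral by residues requires exactly your identification of which root $(-1\pm\sqrt{1-\alpha^2})/\alpha$ lies in the disc. Your observation that the principal root has positive real part, so $|1+\sqrt{1-\alpha^2}|>|1-\sqrt{1-\alpha^2}|$, is what makes the stated right-hand side correct rather than its other-branch counterpart.

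Your method also does not really need $|\alpha|<1$. Jensen's formula tolerates zeros on the circle, so the same computation gives $\ln\frac{\max(|1+w|,|1-w|)}{2}$ with $w^2=1-\alpha^2$ for every complex $\alpha$; for example, it gives $\ln\frac{|\alpha|}{2}$ for real $|\alpha|\ge 1$.

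What the paper's route buys in return is that it exhibits the integral as the real part of an explicit holomorphic function of $\alpha$. Your fallback argument uses the same fact. If you ran it, you should apply the identity theorem to the holomorphic functions themselves (both are real on $(-1,1)$). Two harmonic functions that agree on a real segment need not agree on the disc.
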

	
\begin{proof}
Write $\int_0^1 \ln |1+\alpha \cos(2\pi u)| \, du =  \frac{1}{2\pi} \int_0^{2\pi} \ln |1+\alpha \cos(x)| \, dx$.
As $\ln|w| = \text{Re} (\ln(w))$, we may consider the complex function (replacing $\alpha$ by $z$)
\[
 g(z) = \frac{1}{2\pi} \int_0^{2\pi} \ln (1 + z \cos(x)) \, dx
\]
and take the real part. Note that $g(0)=0$.
For $|z|<1$, $1+z \cos(u)$ never vanishes, so we can differentiate with respect to $z$,
\begin{align*}
g'(z) &=  \frac{1}{2\pi} \int_0^{2\pi} \frac{\cos(x)}{1+z \cos(x)} \, dx
\\
&=  \frac{1}{2\pi z}  \int_0^{2\pi}  1 - \frac{1}{1+z\cos(x)} \, dx
\\
&= \frac{1}{z} - \frac{1}{2\pi z} \int_0^{2\pi}   \frac{1}{1+z\cos(x)} \, dx
\\
&= \frac{1}{z} - \frac{1}{z \sqrt{1-z^2}},
\end{align*}
using the contour integral  $\frac{1}{2\pi} \int_0^{2\pi} \frac{1}{1+z\cos(x)} \, dx =
\frac{1}{\sqrt{1-z^2}}$ (taking the principal branch of the square root).
 This yields
 \begin{align*}
 g(z) &= \int   	\frac{1}{z} - \frac{1}{z \sqrt{1-z^2}}\, dz
 \\
 &= \ln\left(   \frac{1 + \sqrt{1-z^2}}{2} \right).
 \end{align*}	
The integration constant is zero, as $g(0)=0$.	Taking the real part proves the lemma.
\end{proof}		

\begin{lemma}\label{l:alpha}
	Assume $\alpha \in \mathbb{C}$ satisfies $0 < |\alpha| < 1$ and $\left|  \frac{1 + \sqrt{1-\alpha^2}}{2} \right| < 1$. Then either $|1+\alpha| <1$ or $|1-\alpha| < 1$. 
\end{lemma}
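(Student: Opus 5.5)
Let $\beta := \frac{1+\sqrt{1-\alpha^2}}{2}$, so the hypothesis is $|\beta| < 1$. I would argue by contradiction, assuming $|1+\alpha| \geq 1$ and $|1-\alpha| \geq 1$. The aim is to show that this forces $|\beta| \geq 1$.

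The key algebraic identity is
\[
(1+\sqrt{1-\alpha^2})^2 = (1+\alpha) + (1-\alpha) + 2\sqrt{(1+\alpha)(1-\alpha)} ,
\]
which suggests writing $1+\sqrt{1-\alpha^2} = \bigl(\sqrt{1+\alpha}+\sqrt{1-\alpha}\bigr)^2/ \ldots$. More precisely, set $s = \sqrt{1+\alpha}$ and $t = \sqrt{1-\alpha}$, using principal branches; both are well defined since $\operatorname{Re}(1\pm\alpha) > 0$ for $|\alpha|<1$. Then $s^2 + t^2 = 2$ and $st = \sqrt{1-\alpha^2}$. The branch check is that $st$ has positive real part and squares to $1-\alpha^2$, so it equals the principal root.

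With this substitution,
\[
1+\sqrt{1-\alpha^2} = \frac{s^2+t^2}{2} + st = \frac{(s+t)^2}{2},
\]
and therefore
\[
\beta = \left(\frac{s+t}{2}\right)^2 .
\]
So $|\beta|<1$ is equivalent to $|s+t|<2$, while the contradiction assumption reads $|s|\ge 1$ and $|t|\ge 1$.

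Now $s$ and $t$ both lie in the open right half-plane, with arguments in $(-\pi/4,\pi/4)$. Moreover $\arg s$ and $\arg t$ have opposite signs, because $\operatorname{Im}(1+\alpha) = -\operatorname{Im}(1-\alpha)$. This gives
\[
|s+t|^2 = |s|^2 + |t|^2 + 2|s||t|\cos(\arg s - \arg t),
\]
where $|\arg s - \arg t| < \pi/2$, so the cosine is positive. That alone only yields $|s+t|^2 > 2$, which is not enough. A sharper route is to use $s^2+t^2=2$:
\[
|s+t|^2 + |s-t|^2 = 2(|s|^2+|t|^2),
\qquad
|s+t|\,|s-t| = |s^2-t^2| = 2|\alpha| .
\]

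With $A = |s|^2 + |t|^2 \ge 2$, $x = |s+t|^2$ and $y = |s-t|^2$, this reads $x + y = 2A$ and $xy = 4|\alpha|^2$. Hence
\[
x = A + \sqrt{A^2 - 4|\alpha|^2},
\]
taking the larger root since $x \ge y$ (because $\operatorname{Re}(s\bar t) > 0$). The right side is increasing in $A$, so for $A \ge 2$ we get $x \ge 2 + 2\sqrt{1-|\alpha|^2}$. This is still below $4$, so the argument needs more.

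The extra ingredient is the triangle inequality $|s^2|+|t^2| \ge |s^2+t^2| = 2$, with equality exactly when $\arg s = \arg t$. Under the contradiction hypothesis the real gain must come from $A$ itself. Indeed, $|1+\alpha| \ge 1$ and $|1-\alpha| \ge 1$ give $A = |1+\alpha| + |1-\alpha|$. Writing $\alpha = a+ib$, the condition $x \ge 4$ means $A + \sqrt{A^2-4|\alpha|^2} \ge 4$, which is equivalent to $A \ge 2 + |\alpha|^2/2$ (square after isolating the root).

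So the main obstacle reduces to a planar inequality: if $|1\pm\alpha| \ge 1$ and $|\alpha|<1$, then
\[
|1+\alpha| + |1-\alpha| \ge 2 + \tfrac{|\alpha|^2}{2}.
\]
I would first try squaring: $A^2 = 2 + 2|\alpha|^2 + 2|1-\alpha^2|$. Then use $|1-\alpha^2| = |1+\alpha|\,|1-\alpha| \ge \max\bigl(1, |1+\alpha|,|1-\alpha|\bigr)$, together with $|1+\alpha|^2 + |1-\alpha|^2 = 2 + 2|\alpha|^2$, to bound it below. This yields $A \ge 2 + |\alpha|^2/2$, so $|\beta| \ge 1$, contradicting the hypothesis.
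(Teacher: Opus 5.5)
Your proof is correct, and it takes a genuinely different route from the paper. The paper works in Cartesian coordinates. Writing $\alpha = x+iy$ and $\sqrt{1-\alpha^2}=u+iv$, the contradiction hypothesis $|1\pm\alpha|\ge 1$ gives $|\alpha|^2\ge 2|x|$. Combined with $2x^2=|\alpha|^2+1-u^2+v^2$, this forces $u^2-v^2>1$, hence $u>1$, so $|1+\sqrt{1-\alpha^2}|>2$ directly.

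You instead factor $1+\sqrt{1-\alpha^2}=\tfrac12(\sqrt{1+\alpha}+\sqrt{1-\alpha})^2$. Your branch check is right: $\operatorname{Re}(1-\alpha^2)>0$, so $st$ is the principal root. Then the parallelogram law together with $|s+t|\,|s-t|=2|\alpha|$ gives the exact identity
\[
4\left|\frac{1+\sqrt{1-\alpha^2}}{2}\right| = A+\sqrt{A^2-4|\alpha|^2},\qquad A=|1+\alpha|+|1-\alpha| .
\]
Consequently the negative-exponent hypothesis is equivalent to $|1+\alpha|+|1-\alpha|<2+|\alpha|^2/2$. This is a clean reformulation purely in terms of the two multipliers $1\pm\alpha$ that appear in the conclusion, which the paper's computation does not give. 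The price is length: the paper needs only a few lines.

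The one step you assert without checking is the final planar inequality. It does follow from the ingredients you list. Put $P=|1+\alpha|$, $Q=|1-\alpha|$ and $r=|\alpha|^2$. Then $P^2+Q^2=2+2r$, so $\max(P,Q)\ge\sqrt{1+r}$. Since $\min(P,Q)\ge 1$, this gives $PQ\ge\max(P,Q)\ge\sqrt{1+r}$, and therefore
\[
A^2 = 2+2r+2PQ \ge 2+2r+2\sqrt{1+r}.
\]
Comparing with $(2+r/2)^2=4+2r+r^2/4$ reduces the claim to $2\sqrt{1+r}\ge 2+r^2/4$. After squaring this becomes $4r\ge r^2+r^4/16$, which holds for $0<r<1$. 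This is where the contradiction hypothesis is genuinely used; the bound $PQ\ge1$ alone is not enough.

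In a final write-up, include this computation and remove the exploratory dead ends. Also note that $A=|s|^2+|t|^2=|1+\alpha|+|1-\alpha|$ holds by definition, not because of the assumption $|1\pm\alpha|\ge 1$.
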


\begin{proof}
Write $m = \min(|1-\alpha|, |1+\alpha|)$ and
assume for the sake of contradiction that $m \geq 1$. With $\alpha = x + i\,y$, this implies that both
	$|1-\alpha|^2  = (1-x)^2 + y^2  \geq 1$ and  $|1+\alpha|^2  = (1+x)^2 + y^2 \geq 1$.
So $|\alpha|^2 \geq 2x$ and $|\alpha|^2 \geq -2x$, which gives 
$|\alpha|^2 \geq 2|x|$. Squaring both sides yields
\begin{equation*}
	|\alpha|^4 \geq 4x^2.
\end{equation*}
Let $w = \sqrt{1-\alpha^2} = u + i\,v$. By the definition of the principal square root, $u \geq 0$. Since $\alpha^2 = 1 - w^2$ we have
	$x^2 - y^2 + 2i\,xy = 1 - (u^2 - v^2 + 2i\,uv)$.
Equating the real parts gives $x^2 - y^2 = 1 - u^2 + v^2$. Using the identity $2x^2 = |\alpha|^2 + \text{Re}(\alpha^2)$ yields
\begin{equation*}
	2x^2 = |\alpha|^2 + 1 - u^2 + v^2.
\end{equation*}
Substituting this into our inequality $|\alpha|^4 \geq 4x^2$ gives
$|\alpha|^4 \geq 2(|\alpha|^2 + 1 - u^2 + v^2)$,
which implies	
\begin{align*}
	2(u^2 - v^2) &\geq 2 + |\alpha|^2(2 - |\alpha|^2).
\end{align*}
Given $0 < |\alpha| < 1$, the term $|\alpha|^2(2 - |\alpha|^2)$ is strictly positive. Thus $u^2 - v^2 >1$, which implies $u>1$.
 In terms of $u$ and $v$, the condition $\frac{|1+w|}{2} = \left|  \frac{1 + \sqrt{1-\alpha^2}}{2} \right|< 1$ reads $(1+u)^2 + v^2 < 4$.
 With $u>1$ this implies $v^2 < 0$, which is not possible. 
\end{proof}	

\subsection*{Acknowledgements}

We gratefully acknowledge useful discussions with Bernold Fiedler.

\bibliographystyle{amsalpha}

\bibliography{kan}

%
%
%

\end{document}